\newtheorem{theorem}{Theorem}
\newtheorem{lemma}[theorem]{Lemma}
\newtheorem{remark}{Remark}
\newtheorem{proposition}{Proposition}
\theoremstyle{definition}
{

}
\long\def\symbolfootnote[#1]#2{\begingroup
\def\thefootnote{\fnsymbol{footnote}}\footnote[#1]{#2}\endgroup}
\newcommand{\sg}{\sigma}
\newcommand{\bx}[1][\sigma]{\mathrm{1\mbox{-}box}(#1)}
\newcommand{\bond}[1][\sigma]{\mathrm{bond}(#1)}
\newcommand{\fig}[2]{\begin{figure}[ht]
\centerline{\scalebox{.66}{\epsfig{file=#1.eps}}}
\caption{#2}
\label{fig:#1}
\end{figure}}
\title{$(a,b)$-rectangle patterns in permutations and words}
\author{
Sergey Kitaev \\
\small Department of Computer and Information Sciences\\[-0.8ex]
\small University of Strathclyde\\[-0.8ex]
\small Livingstone Tower, 26 Richmond Street\\[-0.8ex]
\small Glasgow G1 1XH, United Kingdom\\[-0.8ex]
\small \texttt{sergey.kitaev@cis.strath.ac.uk}
\and
Jeffrey Remmel \\
\small Department of Mathematics\\[-0.8ex]
\small University of California, San Diego\\[-0.8ex]
\small La Jolla, CA 92093-0112. USA\\[-0.8ex]
\small \texttt{jremmel@ucsd.edu}
}
\date{\small Submitted: Date 1;  Accepted: Date 2;
 Published: Date 3.\\
\small MR Subject Classifications: 05A15, 05E05}
\begin{document}
\maketitle

\begin{abstract}
\noindent \ In this paper, we introduce the notion of a $(a,b)$-rectangle pattern on permutations that not only generalizes the notion of successive elements (bonds) in permutations, but is also related to mesh patterns introduced recently by Br\"and\'en and Claesson. We call the $(k,k)$-rectangle pattern the 
$k$-box pattern. To provide an enumeration result on the maximum number of occurrences of the $1$-box pattern, we establish an enumerative result on pattern-avoiding signed permutations. 

Further, we extend the notion of $(k,\ell)$-rectangle patterns to words and binary matrices, and provide distribution of $(1,\ell)$-rectangle patterns on words; explicit formulas are given for up to 7 letter alphabets where $\ell \in \{1,2\}$, while obtaining distributions for larger alphabets depends on inverting a matrix we provide. We also provide similar results for the distribution of bonds over words. As a corollary to our studies we confirm a conjecture of Mathar on the number of ``stable LEGO walls'' of width 7 as well as prove three conjectures due to Hardin and a conjecture due to Barker. We also enumerate two sequences published by Hardin in the On-Line Encyclopedia of Integer Sequences.\\

\noindent {\bf Keywords:} $(a,b)$-rectangle patterns, $k$-box patterns, bond, $k$-bond, mesh patterns, permutations, words, distribution, successions in permutations, Fibonacci numbers, LEGO
\end{abstract}

\section{Introduction}

The notion of mesh patterns was introduced by Br\"and\'en and Claesson \cite{BrCl} to provide explicit expansions for certain permutation statistics as, possibly infinite, linear combinations of (classical) permutation patterns (see \cite{kit} for a comprehensive introduction to the theory of patterns in permutations and words).  This notion was studied further in a series of papers, e.g. in  \cite{AKV,kitlie,kitrem,kitrem2,Ulf}. 

In this paper, we introduce the notion of an {\em 
$(a,b)$-rectangle} patterns in permutations, words and binary matrices. That is, let $\sigma = \sg_1\sg_2 \ldots \sg_n\in S_n$ be a permutation written in one-line notation, where $S_n$ denotes the set of all permutations of length $n$. Then we will consider the 
graph of $\sg$, $G(\sg)$, to be the set of points $(i,\sg_i)$ for 
$i =1, 2,\ldots, n$.  For example, the graph of the permutation 
$\sg = 471569283$ is pictured in Figure 
\ref{fig:basic}.  

\fig{basic}{The graph of $\sg = 471569283$.}

Then if we draw a coordinate system centered at a 
point $(i,\sg_i)$, we will be interested in  the points that 
lie in the $(2a) \times (2b)$ rectangle centered at the origin, that is, 
in the set of points  
$(i\pm r,\sg_i \pm s)$ such that $r\in \{0, 1,\ldots, a\}$ and 
$s \in \{0, 1,\ldots, b\}$.  We say that $\sg_i$ {\em matches the $(a,b)$-rectangle  pattern} in $\sg$, if there is 
at least one point in the $(2a) \times (2b)$ rectangle centered at the point 
$(i,\sg_i)$ in $G(\sg)$ other than $(i,\sg_i)$. For example, when 
we look for matches of the (2,3)-rectangle patterns, we would 
look at $4 \times 6$ rectangles centered at points $(i,\sg_i)$ as 
pictured in Figure \ref{fig:basic4} for a particular point.

\fig{basic4}{The $4 \times 6$ rectangle centered at the point 
$(4,5)$ in the graph of $\sg = 471569283$.}

We shall refer to the $(k,k)$-rectangle pattern as 
the $k$-box pattern. 
For example, if $\sg = 471569283$, then the 2-box centered 
at the point $(4,5)$ in $G(\sg)$ is the set of circled 
points pictured in Figure 
\ref{fig:basic3}.  Hence, 
$\sg_i$ matches the $k$-box pattern in $\sg$, if there is 
at least one point in the $k$-box centered at the point 
$(i,\sg_i)$ in $G(\sg)$ other than $(i,\sg_i)$. 
For example, $\sg_4$ matches 
the pattern $k$-box for all $k \geq 1$ in $\sg = 471569283$ since 
the point $(5,6)$ is present in the $k$-box centered at the point $(4,5)$ in 
$G(\sg)$ for all $k \geq 1$. 
However, $\sg_3$ only matches the $k$-box pattern 
in  $\sg = 471569283$ for $k \geq 3$ since there are no points 
in 1-box or 2-box centered at $(3,1)$ in $G(\sg)$, but 
the point $(1,4)$ is in the 3-box centered at $(3,1)$ in $G(\sg)$. 
For $k \geq 1$, we let $k\mbox{-box}(\sg)$ denote the set of 
all $i$ such that $\sg_i$ matches the $k$-box pattern in 
$\sg = \sg_1\ldots \sg_n$.

\fig{basic3}{The 2-box centered at the point 
$(4,5)$ in the graph of $\sg = 471569283$.}

In this paper, we shall mainly be interested in the 
1-box patterns in permutations and words.  Note that $\sg_i$ matches the 1-box pattern in a permutation $\sg=\sg_1\ldots\sg_n$ if 
either $|\sg_i - \sg_{i+1}|=1$ or $|\sg_{i-1}-\sg_i|=1$, while if $\sg$ were a word, $\sg_i$ matches the 1-box pattern if either $|\sg_i - \sg_{i+1}|\leq 1$ or $|\sg_{i-1}-\sg_i|\leq 1$. 
For any permutation $\sg = \sg_1 \ldots \sg_n \in S_n$, let $\bx[\sg]$ denote 
the number of $i$ such that $\sg_i$ matches the 1-box pattern in 
$\sg$.  More generally, $\sg_i$ matches the  $(a,b)$-rectangle pattern in 
$\sg$ if 
there is a $\sg_j$ such that $0 < |i-j] \leq a$ and $|\sg_i -\sg_j| \leq b$. 
We let $(a,b)\mbox{-rec}(\sg)$ denote the number of $i$ such that $\sg_i$
matches the  $(a,b)$-rectangle pattern in $\sg$.

Avoidance of the 1-box pattern is given by permutations without rising or falling successions which are also called bonds.  That is, 
a {\em bond} in a permutation $\sg = \sg_1 \ldots \sg_n \in S_n$ is 
is a pair $\sg_i \sg_{i+1}$ of the form $s (s+1)$ or $(s+1) s$ for 
some $s$. We let $\bond[\sg]$ denote the number of bonds of 
$\sg$. We note that in general $\bx[\sg] \neq \bond[\sg]$. 
For example, if $\sg = 214365$, then 
$\bx[\sg]=6$ while $\bond[\sg]=3$. However, for any permutation $\sg\in S_n$,  $\bx[\sg] =0$ if and only if $\bond[\sg]=0$.

The  distributions of 
$\bx[\sg]$ and $\bond[\sg]$ for $S_2$, $S_3$, and $S_4$ are given below.

\begin{center}
\begin{tabular}{ccc}

\begin{tabular}{|c|c|c|}
\hline
$\sg$ &  $\bx[\sg]$ & $\bond[\sg]$\\
\hline
12 & 2&1\\
\hline
21 & 2 &1\\
\hline
\end{tabular}

& \ \ \ \ \ &

\begin{tabular}{|c|c|c|}
\hline
$\sg$ &  $\bx[\sg]$ & $\bond[\sg]$ \\
\hline
123 & 3 & 2\\
\hline 
132 & 2 & 1\\
\hline
213 & 2 & 1\\
\hline
231 & 2 & 1\\
\hline
312 & 2 & 1 \\
\hline
321 & 3 & 2\\
\hline
\end{tabular}\\

\end{tabular}
\end{center}

\begin{center}
\begin{tabular}{|c|c|c|c|c|c|c|}
\hline
$\sg$ &  $\bx[\sg]$ & $\bond[\sg]$ & \ & $\sg$ & $\bx[\sg]$ & $\bond[\sg]$\\
\hline
1234 & 4 & 3& \ & 2134 & 4 & 2\\
\hline 
1243 & 4 & 2 & \ & 2143 & 4 & 2\\
\hline
1324 & 2 & 1&  \ & 2314 & 2 & 1\\
\hline
1342 & 2 & 1 & \ & 2341 & 3 & 2\\
\hline
1423 & 2 & 1  & \ & 2413 & 0 & 0\\
\hline
1432 & 3 & 2  & \ & \ 2431 & 2 & 1\\
\hline
3124 & 2 & 1 &\ & 4123 & 3 & 2\\
\hline 
3142 & 0 & 0 &\ & 4132 & 2 & 1\\
\hline
3214 & 3 & 2 & \ & 4213 & 2 & 1\\
\hline
3241 & 2 & 1 & \ & 4231 & 2 & 1\\
\hline
3412 & 4 & 2 & \ & 4312 & 4 & 2\\
\hline
3421 & 4  & 2 & \ &  4321 & 4 & 3\\
\hline
\end{tabular}
\end{center}

Finding the number of permutations $\sg$ of length $n$ with $\bond[\sg] =0$ (equivalently, $\bx[\sg]=0$) is equivalent to solving the {\em problem of Hertzsprung}, which is finding the number of ways to arrange $n$ non-attacking kings on an $n\times n$ board, with one in each row and column.   Riordan \cite{Riordan} first derived a recurrence relation for the number $a_n$ of such permutations in 1965: $a_0 = a_1=1$, $a_2=a_3 = 0$, and for $n\geq 4$,  
$$a_n = (n+1)a_{n-1}-(n-2)a_{n-2}-(n-5)a_{n-3}+(n-3)a_{n-4}.$$
The initial values for $a_n$ are $$1, 1, 0, 0, 2, 14, 90, 646, 5242, 47622, 479306, 5296790, 63779034, \ldots.$$
We refer to the sequence A002464 in the {\em On-Line Encyclopedia of Integer Sequences} ({\em OEIS}) for many references and for other interpretations/properties of this sequence of numbers. In particular, the generating function for these numbers was derived by Flajolet:
$$\sum_{n\geq 0} \frac{n!x^n(1-x)^n}{(1+x)^n}.$$

Riordan \cite{Riordan} obtained a more general result.  That is, let $S_{n, m}$ be the number of permutations in $S_n$ with exactly $m$ bonds, and let $S[n]:=S[n](t) = \sum_{m \geq 0} S_{n, m}t^m$. Then $S[0] = 1$, $S[1] = 1$, $S[2] = 2t$, $S[3] = 4t+2t^2$, and for $n\geq 4$,
\begin{small} 
$$S[n] = (n+1-t)S[n-1] - (1-t)(n-2+3t)S[n-2] - (1-t)^2(n-5+t)S[n-3] + (1-t)^3(n-3)S[n-4].$$
\end{small}
In particular, the coefficient of $t$ in $S[n](t)$  gives the number of permutations of length $n$ with exactly one bond, which, in our terminology, is the number of permutations in $S_n$ with exactly two occurrences of the 1-box pattern. This is the sequence A086852 in the OEIS. Clearly, there are no permutations with exactly one occurrence of the 1-box pattern. 

It is straightforward to see that the number of permutations of length $n+1$ with exactly three occurrences of the 1-box pattern is equal to the number of permutations of length $n$ with exactly two occurrences of the 1-box pattern. Indeed, to have exactly three occurrences of the pattern in a permutation $\pi$ means to have in $\pi$ a factor either of the form $a(a+1)(a+2)$ or of the form $(a+2)(a+1)a$, and no other consecutive successive elements. Removing $(a+1)$ from $\pi$ and decreasing by $1$ all elements that are larger than $(a+1)$, we get a permutation containing exactly two occurrences of the 1-box pattern. This procedure is obviously reversible. Thus,  the coefficient of $t$ in $S[n](t)$  also gives the number of permutations of length $n+1$ with exactly three occurrences of the 1-box pattern. 

Hence, our study of 1-box/$k$-box patterns can not only be seen as an extension of the study of mesh patterns, but also as an extension of the study of consecutive successive elements (bonds) conducted in the literature. We do not define the notation of mesh patterns in this paper; however, the relevance of these patterns to our patterns is that in both cases we look for presence of points in specified regions in graphical representation of permutations. 

In Theorem \ref{permsMain}, we will enumerate permutations having the maximum number of occurrences of the 1-box pattern. To achieve this result, we obtain a result on pattern-avoiding signed permutation (see Theorem \ref{barredEnum}) thus contributing to the theory of permutation patterns (see \cite{kit}). 

In Section \ref{secWords} we not only provide a general solution (in matrix form) for finding the distribution of bonds and 1-box patterns over words
(see Theorems \ref{thm:1bWords} and \ref{thm:1boxgf}) but also apply our studies to settle a conjecture of Mathar on the number of ``stable LEGO walls'' of width 7 (see Subsection \ref{subLEGO}), as well as to settle three conjectures of Hardin (see Subsection \ref{threeHardin}) and a conjecture of Barker (see Subsection \ref{secWords2}). Also, in Subsection  \ref{secWords2}, we enumerate two sequences published by Hardin in the OEIS.

Given a word $w_1 \ldots w_n \in [\ell]^n$, where $[\ell]=\{1,2,\ldots,\ell\}$, we say that 
the pair $w_iw_{i+1}$ is a {\em $k$-bond} if  $|w_i-w_{i+1}| \leq k$. In Subsection \ref{secWords2}, we study the distribution of 2-bonds and (1,2)-rectangle patterns in words.

\section{Permutations with the maximum number of occurrences of the 1-box pattern}

It is straightforward to see that the maximum possible number of occurrences of the 1-box pattern in a permutation of length $n$ is $n$ (e.g. the increasing permutation $12\ldots n$ achieves this maximum). 

In order to enumerate permutations with the maximum number of occurrences of the 1-box pattern, we  need the notion of the {\em hyperoctahedral group} $B_n$ whose elements can be regarded as {\em signed permutations} written as $\alpha=\alpha_1\alpha_2\cdots \alpha_n$ in which each of the letters $1,2,\ldots,n$ appears, possibly barred. For example, $B_2=\{12,\overline{1}2,1\overline{2},\overline{1}\overline{2},21,\overline{2}1,2\overline{1},\overline{2}\overline{1}\}$. Clearly, $|B_n|=2^nn!$.

\begin{theorem}\label{barredEnum} The exponential generating function for $a_n$, the number of elements in $B_n$ avoiding factors of the form $i(i+1)$ and $(\overline{i+1})\overline{i}$ simultaneously, is given by
\begin{equation}\label{egf}
A(t)=\sum_{n\geq 2}\frac{a_{n}t^n}{n!}=1+2\int_{0}^{t}\frac{e^{-z}}{(1-2z)^2}\ dz.
\end{equation} 
The initial values for $a_n$ are
$$1,2,6,34,262,2562,30278,419234,6651846,118950658,2366492038,\ldots.$$\end{theorem}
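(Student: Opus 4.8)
The plan is to enumerate $a_n$ by inclusion–exclusion over the occurrences of the two forbidden factors, in the spirit of Riordan's analysis of bonds. For a signed permutation $\alpha=\alpha_1\cdots\alpha_n\in B_n$, call a position $p\in\{1,\dots,n-1\}$ \emph{bad} if $\alpha_p\alpha_{p+1}$ is a factor of the form $i(i+1)$ (\emph{type A}) or $(\overline{i+1})\,\overline{i}$ (\emph{type B}). Writing $\mathrm{Bad}(\alpha)$ for the set of bad positions, the identity $\sum_{T\subseteq U}(-1)^{|T|}=[U=\emptyset]$ gives the sign-reversing expansion
\[
a_n=\#\{\alpha:\mathrm{Bad}(\alpha)=\emptyset\}=\sum_{T\subseteq\{1,\dots,n-1\}}(-1)^{|T|}M(T),\qquad M(T):=\#\{\alpha:\;T\subseteq\mathrm{Bad}(\alpha)\},
\]
so the whole problem reduces to evaluating $M(T)$.

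The key structural step is to prove that $M(T)=(n-|T|)!\,2^{\,n-|T|}$ for \emph{every} $T$. A maximal run of consecutive positions lying in $T$ forces a block of adjacent entries, each consecutive pair of which is a forbidden factor. The crux is a short lemma that such a block is \emph{monochromatic}, i.e.\ entirely of type A or entirely of type B. Indeed, if $\alpha_p\alpha_{p+1}$ is type A then $\alpha_{p+1}$ is unbarred, so $\alpha_{p+1}\alpha_{p+2}$ can only be forbidden as another type A factor, since a type B factor needs its first entry barred; symmetrically, a type B pair can be continued only by type B. Hence a forced run of $m$ positions is realized either as the unbarred ascending run $v(v+1)\cdots(v+m-1)$ or as the barred descending run $(\overline{v+m-1})\cdots\overline{v}$ — exactly two internal realizations once its interval of absolute values is fixed. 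A position outside every run is a singleton block, again with exactly two realizations (its sign). Thus the $n$ positions split into $r:=n-|T|$ blocks, each carrying one independent binary choice, for a factor $2^{r}$; and distributing the absolute values amounts to tiling the value axis $\{1,\dots,n\}$ by the $r$ (positionally distinguishable) blocks in some left-to-right order, which can be done in $r!$ ways. Therefore $M(T)=r!\,2^{r}$, with no feasibility exceptions.

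Substituting and grouping by $k=|T|$ (there are $\binom{n-1}{k}$ subsets $T$ of size $k$) yields the closed form
\[
a_n=\sum_{k=0}^{n-1}(-1)^k\binom{n-1}{k}(n-k)!\,2^{\,n-k}.
\]
I would then pass to the exponential generating function. Differentiating termwise and simplifying $\binom{n-1}{k}(n-k)!/(n-1)!=(n-k)/k!$, the substitution $j=n-k\ge1$ factors the double sum as a product:
\[
A'(t)=\sum_{n\ge1}\frac{a_n}{(n-1)!}\,t^{n-1}=\frac1t\Bigl(\sum_{k\ge0}\frac{(-t)^k}{k!}\Bigr)\Bigl(\sum_{j\ge1}j\,(2t)^j\Bigr)=\frac1t\cdot e^{-t}\cdot\frac{2t}{(1-2t)^2}=\frac{2e^{-t}}{(1-2t)^2}.
\]
Since $A(0)=a_0=1$, integrating gives $A(t)=1+2\int_0^t\frac{e^{-z}}{(1-2z)^2}\,dz$, as claimed, and the listed initial values follow by expansion.

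The main obstacle is the monochromaticity lemma together with the bookkeeping in $M(T)$: one must verify that the two forbidden factor types can never chain into one another, so that each forced block contributes exactly one binary choice while the value-intervals are distributed independently in $r!$ ways. This is precisely where the particular pairing of forbidden factors—ascending-unbarred together with descending-barred—is essential, since a different pairing would allow the two types to interleave and destroy the clean factor-$2$-per-block structure. Once $M(T)=r!\,2^{r}$ is established, the binomial sum and the generating-function extraction are routine.
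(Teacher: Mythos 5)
Your argument is correct, and it takes a genuinely different route from the paper's. The paper proves the recurrence $a_n=(2n-1)a_{n-1}+2(n-2)a_{n-2}$ by a combinatorial insertion/deletion argument: it introduces the auxiliary count $b_n$ of signed permutations with exactly one ``bad pair,'' shows $b_n=(n-1)a_{n-1}$ via a doubling operation and $a_n=2b_{n-1}+(2n-1)a_{n-1}$ by removing the largest element, and then converts the recurrence into a first-order linear ODE for $A'(t)$ whose solution is $2e^{-t}/(1-2t)^2$. You instead run inclusion--exclusion over the set of forced bad positions; the heart of your argument is the monochromaticity observation (a type-A pair can only chain into type A, and type B into type B, because the bar status of the shared entry is determined), which gives each forced block exactly two realizations and yields the clean count $M(T)=(n-|T|)!\,2^{n-|T|}$ with no exceptional cases. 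I checked the resulting closed form $a_n=\sum_{k=0}^{n-1}(-1)^k\binom{n-1}{k}(n-k)!\,2^{n-k}$ against the listed values ($34$, $262$, \dots) and the product factorization of $A'(t)$; both are right. Your approach buys an explicit alternating-sum formula for $a_n$ that the paper never states and reaches the exponential generating function without solving a differential equation, while the paper's approach yields the short two-term recurrence, which is more convenient for computing the sequence and is also the form quoted and reused in Theorem \ref{permsMain}. (One cosmetic point: the theorem's display writes $\sum_{n\geq 2}$, evidently a typo for $\sum_{n\geq 0}$; like the paper's own proof, you correctly use $A(0)=a_0=1$ and $A'(0)=a_1=2$.)
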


\begin{proof} Clearly, $a_0=1$ and $a_1=2$ since the empty signed permutation, as well as 1 and $\overline{1}$, avoid the prohibited factors. Our goal is to show that for $n\geq 2$,
\begin{equation}\label{recBarred} a_n=(2n-1)a_{n-1}+2(n-2)a_{n-2}.\end{equation}

In what follows, by {\em doubling} an element $i$ (resp. $\overline{i}$) of $\pi\in B_n$ we mean increasing all the elements of $\pi$, if any,  that are greater than $i$ by 1, and substituting $i$ (resp. $\overline{i}$) by $i(i+1)$ (resp. $(\overline{i+1})\overline{i}$). 

Let $b_n$ be the number of elements in $B_n$ with exactly one occurrence of either the factor $i(i+1)$ or $(\overline{i+1})\overline{i}$. We refer to the elements of such an occurrence as a {\em bad pair}. In particular, doubling an element results in appearance of exactly one new bad pair. 

It is easy to see that 
\begin{equation}\label{bANDa} 
b_n=(n-1)a_{n-1}. 
\end{equation}
Indeed, the only way to create an object counted by $b_n$ is to pick an object counted by $a_{n-1}$ and to double one of its elements (exactly one bad pair is then created); this procedure is obviously reversible, since reversing the doubling procedure will never introduce new bad pairs.

Next, we will show the following relation between $a_n$s and $b_n$s:
\begin{equation}\label{aANDb}
a_n=2b_{n-1}+2na_{n-1}-a_{n-1}.
\end{equation}
Indeed, remove the largest element (either $n$ or $\overline{n}$) in $\pi$ counted by $a_n$ to obtain $\pi'$. Since clearly at most one bad pair can be created, either $\pi'$ is counted by $b_{n-1}$ or by $a_{n-1}$. Thus, to generate all objects counted by $a_n$, we either take an object counted by
\begin{itemize} 
\item $b_{n-1}$ and break the bad pair by inserting either $n$ or $\overline{n}$ between the bad pair elements; there are $2b_{n-1}$ ways to do this (note that there are no problems with $n-1$ or $\overline{n-1}$ be involved in the bad pair, since inserting either $n$ or $\overline{n}$ in this case will still not create a new bad pair), or an object counted by 
\item $a_{n-1}$. There are $n$ possible places we can insert either $n$ or $\overline{n}$ giving us $2na_{n-1}$ possibilities. However, inserting $n$ right after $(n-1)$ or inserting $\overline{n}$ right before $\overline{n-1}$ will give us a bad pair, and thus must not be counted: there are $a_{n-1}$ such objects (for each $\pi'$ counted by $a_{n-1}$ there is a unique bad position and a unique choice of the largest element to be inserted to create an object counted by $b_n$ rather than by $a_n$). This completes the proof of (\ref{aANDb}).  
\end{itemize}

Using (\ref{bANDa}) and (\ref{aANDb}) we obtain (\ref{recBarred}).

Note that second derivative of $A(t)$ is given by 
\begin{eqnarray*}
A^{\prime \prime}(t) &=& \sum_{n \geq 0} a_{n+2} \frac{t^n}{n!} = \sum_{n \geq 0} ((2n+3)a_{n+1} + 2na_n) \frac{t^n}{n!} \\
&=& 2t \sum_{n \geq 1} a_{n+1} \frac{t^{n-1}}{(n-1)!} + 
3 \sum_{n \geq 0} a_{n+1}\frac{t^n}{n!} + 2t \sum_{n \geq 1} a_{n} \frac{t^{n-1}}{(n-1)!} \\
&=& 2tA^{\prime \prime}(t) + 3A^{\prime}(t) +2tA^{\prime}(t).
\end{eqnarray*}
Solving for $A^{\prime \prime}(t)$, we see that 
\begin{equation*}
A^{\prime \prime}(t) = \frac{2t+3}{1-2t} A^{\prime}(t)
\end{equation*}
or, equivalently,
\begin{equation}\label{eq:A1}
\frac{A^{\prime \prime}(t)}{A^{\prime }(t)} = -1 + \frac{4}{1-2t}.
\end{equation}
Integrating both sides of (\ref{eq:A1}) and using the 
fact that $A^{\prime}(0) = 2$, we see 
that 
\begin{equation*}
\ln(A^{\prime}(t)) = -t -2\ln(1-2t) + \ln(2).
\end{equation*}
Thus 
\begin{equation}\label{eq:A2}
A^{\prime}(t) =2 \frac{e^{-t}}{(1-2t)^2}.
\end{equation}
Integrating both sides of (\ref{eq:A2}) and using the 
fact that $A(0) = 1$, we see 
that 
\begin{equation*}\label{eq:A3}
A(t) = 1 + 2 \int_{0}^t \frac{e^{-t}}{(1-2t)^2}\mbox{d}t. 
\end{equation*}

\end{proof}

\begin{remark} Theorem \ref{barredEnum} is a result on pattern avoidance in signed permutations (see \cite[Chapter 9.6]{kit} for relevant results). In fact, avoidance of factors of the form $i(i+1)$ and $(\overline{i+1})\overline{i}$ can be expressed in terms of avoidance of {\em bivincular patterns} (see \cite[Chapter 1.4]{kit} for definition; bars can be incorporated in the definition in an obvious way extending it from $S_n$ to $B_n$), and thus  Theorem \ref{barredEnum}  seems to be the first instance of enumerative results on signed permutations avoiding bivincular patterns.  \end{remark}

\begin{theorem}\label{permsMain} The number of permutations in $S_n$ with the maximum number of occurrences of the $1$-box pattern (which is $n$) is given by 
\begin{equation}\label{sumBin}
\sum_{j=1}^{\lfloor\frac{n}{2}\rfloor}{n-j-1\choose j-1}a_j
\end{equation}
where $a_j$'s are given by  the recurrence (\ref{recBarred}) or by the exponential generating function (\ref{egf}). The initial values for the number of such permutations starting with the case $n=0$ are
$$1, 1, 2, 2, 8, 14, 54,128, 498, 1426, 5736, 18814, 78886, 287296, 1258018,\ldots.$$  
\end{theorem}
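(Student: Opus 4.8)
The plan is to give a bijection between the permutations in question and pairs consisting of an admissible signed permutation and a composition of $n$, and then to count such pairs using Theorem~\ref{barredEnum}.

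First I would characterize the permutations $\sigma\in S_n$ with $\bx[\sigma]=n$. Recall that $\sigma_i$ matches the $1$-box pattern precisely when $\sigma_i$ is part of a bond, i.e.\ when $|\sigma_{i-1}-\sigma_i|=1$ or $|\sigma_i-\sigma_{i+1}|=1$. Call positions $i$ and $i+1$ \emph{linked} if $\sigma_i\sigma_{i+1}$ is a bond, and consider the maximal runs of consecutive positions joined by links; these runs partition $\{1,\dots,n\}$ into \emph{blocks}. Since $\sigma$ has distinct entries, a linked run cannot reverse direction, so the entries of each block form a monotone sequence of consecutive integers, either increasing or decreasing. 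Then $\bx[\sigma]=n$ holds if and only if every block has length at least $2$, because an element matches the $1$-box pattern exactly when its block is nontrivial.

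Next I would set up the encoding. Suppose $\sigma$ has $j$ blocks. Listed in increasing order of the values they occupy, the blocks cover the value intervals $[1,\ell_{(1)}],\ [\ell_{(1)}+1,\ell_{(1)}+\ell_{(2)}],\dots$, where the lengths $\ell_{(1)},\dots,\ell_{(j)}\geq 2$ form a composition of $n$ into $j$ parts each at least $2$. Contracting the $k$-th value-block to the letter $k$, barred if the block is decreasing and unbarred if increasing, and reading the blocks in order of position, produces a signed permutation $\alpha\in B_j$. Conversely, a composition $(\ell_{(1)},\dots,\ell_{(j)})$ together with $\alpha\in B_j$ reconstructs $\sigma$ uniquely by re-expanding each letter into its monotone run on the prescribed value interval. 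The crux is to show that this reconstruction genuinely has $j$ \emph{maximal} blocks, i.e.\ that no two value-adjacent blocks fuse into one longer monotone run, and that non-fusion is equivalent to $\alpha$ avoiding the factors $i(i+1)$ and $(\overline{i+1})\overline{i}$ --- exactly the avoidance condition of Theorem~\ref{barredEnum}. For value-consecutive blocks $k$ and $k+1$, with value ranges $[a,b]$ and $[b+1,c]$, the only entries that could form a bond across the blocks are the values $b$ and $b+1$; I would examine when these are positionally adjacent and extend a monotone run. Checking the four sign combinations, and using that both blocks have length at least $2$ to rule out the degenerate junction bonds, one finds that fusion occurs precisely when block $k$ is immediately followed in position by block $k+1$ with both increasing (the factor $i(i+1)$) or block $k+1$ is immediately followed by block $k$ with both decreasing (the factor $(\overline{i+1})\overline{i}$), while mixed orientations never fuse. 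This case analysis is the main obstacle, since one must confirm that no other configuration produces a bond bridging two blocks.

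With the bijection in hand the count is immediate. For fixed $j$ with $1\leq j\leq \lfloor n/2\rfloor$, the number of admissible signed permutations $\alpha\in B_j$ is $a_j$ by Theorem~\ref{barredEnum}, and the number of compositions of $n$ into $j$ parts each at least $2$ is $\binom{n-j-1}{j-1}$ (substitute $\ell_{(i)}=\ell_{(i)}'+2$ with $\ell_{(i)}'\geq 0$). Multiplying and summing over $j$ yields $\sum_{j=1}^{\lfloor n/2\rfloor}\binom{n-j-1}{j-1}a_j$, as claimed; a check against the small values $n=2,3,4$ (giving $2,2,8$) confirms the formula.
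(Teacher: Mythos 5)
Your proposal is correct and follows essentially the same route as the paper: decompose the permutation into maximal monotone runs of consecutive values (all of length at least $2$), encode the run directions and relative value order as a signed permutation avoiding $i(i+1)$ and $(\overline{i+1})\overline{i}$, and multiply $a_j$ by the number $\binom{n-j-1}{j-1}$ of compositions of $n$ into $j$ parts each at least $2$. Your explicit case analysis of when two value-adjacent blocks fuse is a welcome elaboration of a step the paper merely asserts, but it is not a different method.
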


\begin{proof} Each permutation $\pi\in S_n$ having the maximum number of occurrences of the 1-box pattern can be uniquely decomposed into maximal factors of consecutive elements of size at least 2, since each element of $\pi$ must be staying next to a consecutive element. For example, the permutation $\pi=543126798$ is decomposed into maximal factors 543, 12, 67 and 98. Let a permutation $\pi'$ be obtained from $\pi$ by substituting the $i$th largest block with $i$ if it is increasing, and with $\overline{i}$ if it is decreasing. We refer to $\pi'$ as the {\em basis permutation} for $\pi$ and, clearly, $\pi'\in B_n$ for some $n$. For $\pi$ as above, $\pi'=\overline{2}13\overline{4}$. Since the decomposition factors are of maximal possible length, basis permutations must avoid factors of the form $i(i+1)$ and $(\overline{i+1})\overline{i}$, and these permutations were counted by us in Theorem \ref{barredEnum}. 

Finally, to create permutations of length $n$ with the maximum number of occurrences of the 1-box pattern, we choose basis permutations of length $j$, $1\leq j\leq \lfloor\frac{n}{2}\rfloor$, and decide on the lengths of the $j$ decomposition factors to be made decreasing or increasing depending on the respective elements to have or not to have bars, respectively. These lengths must be of size at least 2, and  it is a standard combinatorial problem to see that the number of ways to make such a decision is ${n-j-1\choose j-1}$ (indeed, we reserve $2j$ elements to make sure each decomposition factor will contain at least two elements; the remaining $n-2j$ elements can be distributed among $j$ factors in the desired number of ways). Note that all permutations of interest will be generated in a bijective manner, which completes our proof of (\ref{sumBin}).\end{proof}

\section{Distribution of bonds and $1$-box patterns over words}\label{secWords}

Given a word $w =w_1 \ldots w_n$, 
let $|w| =n$ be the length of the $w$ and 
$\bx[w]$ denote the number of occurences of the 1-box pattern in $w$. 
A {\em bond} in $w$ is a pair $w_i w_{i+1}$ of the form 
$s(s+1)$, $(s+1)s$, or $ss$ for some $s$. We let 
$\bond[w]$ denote the number of bonds of $w$.  

In Subsection \ref{bondDist} we study distribution of bonds over words, while in Subsection \ref{1boxDist} we study distribution of 1-box patterns over words. Three relevant conjectures of Hardin are settled in Subsection~\ref{threeHardin}, and a conjecture of Mathar on stable LEGO walls is settled in Subsection \ref{subLEGO}. In Subsection \ref{secWords2}, we consider $(1,k)$-rectangle patterns for $k\geq 2$, which led us to solving a conjecture of Barker and enumerating two sequences of Hardin published in the OEIS.

\subsection{Distribution of bonds over words.}\label{bondDist}

As in the case of permutations, it is realatively straightforward 
to find the generating functions for the number of bonds in 
words over $[\ell]$ for any $\ell \geq 1$. That is, let
$$A_{\ell,1}(x,t)= \sum_{w \in [\ell]^*} x^{\bond[w]}t^{|w|} =\sum_{m,n\geq 0}a_{\ell}(m,n)x^mt^n,$$
where $ [\ell]^*$ is the set of all words over the alphabet $[\ell]$. Thus $a_{\ell,1}(m,n)$ is the number of words $w$ of length $n$ over the alphabet $[\ell]$ such that $\bond[w] =m$. 

The following theorem gives the distribution of bonds over words in matrix form.

\begin{theorem}\label{thm:1bWords} 
The generating function $A_{\ell,1}(x,t)$ is equal to $$1+(\underbrace{1,\ldots,1}_{\ell})\mathbb{A}_{\ell,1}^{-1}(\underbrace{-t,\ldots,-t}_{\ell})^{T}$$ where $\mathbb{A}_{\ell,1}$ is the following $\ell\times\ell$ matrix: 

$$\mathbb{A}_{\ell,1}=\left(\begin{array}{cccccccc} 
   xt-1 & xt         &     t      &       t   & t           & \cdots & t & t \\ 
   xt     & xt-1     &    xt     &       t   & t           & \cdots & t & t \\ 
   t       & xt         &    xt-1 &       xt & t           & \cdots & t & t \\ 
 t       &  t         &    xt &       xt-1 & xt           & \cdots & t & t \\ 
\vdots & \vdots & \vdots & \vdots & \vdots & \ddots & \vdots & \vdots \\ 
    t      &       t    &     t      &    t       &  t & \cdots  &     xt    &  xt-1 \end{array}\right).$$

\end{theorem}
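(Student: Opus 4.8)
The plan is to set up a \emph{last-letter} (transfer-matrix style) linear system for the generating function, refined by the terminal symbol of the word. For each $a\in[\ell]$ let
\begin{equation*}
f_a=f_a(x,t)=\sum_{\substack{w\in[\ell]^*,\ w\neq\emptyset\\ \text{last letter of }w\text{ is }a}} x^{\bond[w]}t^{|w|},
\end{equation*}
so that $A_{\ell,1}(x,t)=1+\sum_{a=1}^{\ell}f_a$, the isolated $1$ accounting for the empty word. The whole proof reduces to finding the $f_a$, and the point is that they satisfy a linear system whose coefficient matrix is exactly (the negative of) $\mathbb{A}_{\ell,1}$.

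Next I would derive the recurrence for $f_a$ by a one-step build-up: a non-empty word ending in $a$ is either the single letter $a$ (contributing $t$) or is obtained from a non-empty word $w'$ ending in some letter $b$ by appending $a$. Appending $a$ raises the length by one (a factor $t$) and, crucially, creates exactly one new bond precisely when $w'_{\text{last}}=b$ satisfies $|a-b|\le 1$ (covering the three bond shapes $s(s+1)$, $(s+1)s$, and $ss$), contributing an extra factor $x$ in that case and nothing extra otherwise. This yields, for each $a\in[\ell]$,
\begin{equation*}
f_a=t+t\sum_{b=1}^{\ell}f_b+t(x-1)\sum_{b:\,|a-b|\le 1}f_b .
\end{equation*}

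I would then move all the $f_b$ to the left and read off the coefficient of each $f_b$ in equation $a$: the diagonal coefficient is $1-t-t(x-1)=1-xt$, an adjacent coefficient ($|a-b|=1$) is $-t-t(x-1)=-xt$, and a far coefficient ($|a-b|\ge 2$) is $-t$. Writing $\mathbf{f}=(f_1,\dots,f_\ell)^T$, this is the matrix equation $M\mathbf{f}=t(1,\dots,1)^T$, where $M$ is precisely $-\mathbb{A}_{\ell,1}$ (compare entrywise: $\mathbb{A}_{\ell,1}$ has $xt-1$ on the diagonal, $xt$ on the super/sub-diagonal, and $t$ elsewhere). Hence $\mathbb{A}_{\ell,1}\mathbf{f}=(-t,\dots,-t)^T$, so $\mathbf{f}=\mathbb{A}_{\ell,1}^{-1}(-t,\dots,-t)^T$, and summing gives $A_{\ell,1}=1+(1,\dots,1)\mathbb{A}_{\ell,1}^{-1}(-t,\dots,-t)^T$, as claimed. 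Invertibility is not an issue: at $t=0$ the matrix $\mathbb{A}_{\ell,1}$ specializes to $-I$, so it is invertible over the ring of formal power series in $t$.

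I expect the only real pitfalls to be bookkeeping rather than conceptual. The step that most needs care is the bond-count accounting when appending a letter: I must make sure the three bond shapes are all captured by the single condition $|a-b|\le 1$ (in particular the equal-letters case $b=a$ is a bond and must carry the factor $x$), that exactly \emph{one} new bond is created by a single append, and that the $+t(x-1)\sum_{|a-b|\le1}f_b$ correction produces the coefficients $1-xt$, $-xt$, $-t$ with the right signs after transposition into $-\mathbb{A}_{\ell,1}$. Once the matrix is correctly identified, solving by $\mathbb{A}_{\ell,1}^{-1}$ and summing the coordinates of $\mathbf{f}$ is immediate.
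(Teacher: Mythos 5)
Your proof is correct and takes essentially the same approach as the paper: the paper conditions on the \emph{first} letter and prepends, while you condition on the \emph{last} letter and append, but since the bond statistic is invariant under word reversal these produce the identical linear system $-\mathbb{A}_{\ell,1}\mathbf{f}=t(1,\dots,1)^T$. Your bond accounting (exactly one new bond precisely when $|a-b|\le 1$, including the equal-letter case $ss$) and the final inversion and summation match the paper's argument.
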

 
\begin{proof}

Let $i[\ell]^*$ denote the set of words over $[\ell]$ that begin with a letter $i$. For $1\leq i\leq \ell$, let 
$$A_{\ell,1}^{(i)}(x,t)= \sum_{w \in i[\ell]^*} x^{\bond[w]}t^{|w|} = 
\sum_{m,n\geq 0}a^{(i)}_{\ell,1}(m,n)x^mt^n.$$ 
Thus $a^{(i)}_{\ell,1}(m,n)$ is the number of words of length $n$ over $[\ell]$ such that $w$ begins with the letter $i$ and 
$\bond[w] =m$. Clearly, 
\begin{equation}\label{sumA}
A_{\ell,1}(x,t)=1+\sum_{1\leq i\leq \ell}A_{\ell,1}^{(i)}(x,t). 
\end{equation}
(The term $1$ in (\ref{sumA}) comes from the empty word.) 
Also, we have the following system of equations, where to obtain $A_{\ell,1}^{(i)}(x,t)$, we can think of taking words counted by $A_{\ell,1}^{(j)}(x,t)$, $1\leq j\leq \ell$, and adjoining the letter $i$ to the left of them; these functions are then to be multiplied by $xt$ if $|i-j|\leq 1$ (indicating that the length of such words is increased by 1 and one more bond is created), and by $t$ otherwise (to indicate change of the length keeping the number of occurrences of bonds the same); we also need to add $t$ corresponding to the one-letter word $i$.

\begin{small}
\begin{eqnarray*} 
A_{\ell,1}^{(1)}(x,t) &=& t+xtA_{\ell,1}^{(1)}(x,t) + xtA_{\ell,1}^{(2)}(x,t) + t A_{\ell,1}^{(3)}(x,t) + t A_{\ell,1}^{(4)}(x,t) +\cdots  + t A_{\ell,1}^{(\ell)}(x,t);  \\
A_{\ell,1}^{(2)}(x,t) &=& t+xtA_{\ell,1}^{(1)}(x,t) + xtA_{\ell,1}^{(2)}(x,t) + xt A_{\ell,1}^{(3)}(x,t) +t A_{\ell,1}^{(4)}(x,t) + \cdots  + t A_{\ell,1}^{(\ell)}(x,t);  \\
A_{\ell,1}^{(3)}(x,t) &=& t+tA_{\ell,1}^{(1)}(x,t) + xtA_{\ell,1}^{(2)}(x,t) + xt A_{\ell,1}^{(3)}(x,t) +xt A_{\ell,1}^{(4)}(x,t) + \cdots  + t A_{\ell,1}^{(\ell)}(x,t);  \\
&& \hspace{4cm}\vdots \\
A_{\ell,1}^{(\ell)}(x,t) &=& t+tA_{\ell,1}^{(1)}(x,t) + tA_{\ell,1}^{(2)}(x,t) + \cdots + t A_{\ell,1}^{(\ell-2)}(x,t) +xt A_{\ell,1}^{(\ell-1)}(x,t) + xt A_{\ell,1}^{(\ell)}(x,t).  \\
\end{eqnarray*}
\end{small}
Solving the system for the functions $A_{\ell,1}^{(i)}(x,t)$ and applying (\ref{sumA}) we get the desired result.  \end{proof} 

As corollaries to Theorem \ref{thm:1bWords}, we can obtain, e.g. using Mathematica, explicit generating functions for $\ell$ letter alphabets, where $3\leq\ell\leq 7$.  These are presented in Table \ref{tab1}. Note that $A_{1,1}(x,t)$ and $A_{2,1}(x,t)$ are trivial since 
any word $w$ of length $n$ over the alphabet $\{1\}$ or the alphabet 
$\{1,2\}$ has $n-1$ bonds.  We also give expansions of the functions $A_{\ell}(x,t)$ for $\ell=3,...,7$:

\begin{table}
\begin{center}
\begin{tabular}{c|c}
$\ell$ & generating function for distribution of the number of bonds \\
\hline
& \\[-3mm]
$A_{3,1}(x,t)$ & $\frac{1-2(x-1)t-(x-1)^2t^2}{1-t-2  xt-x (x-1) t^2}$ \\[2mm]
\hline
& \\[-3mm]
$A_{4,1}(x.t)$ & $\frac{1-3(x-1)t+(x-1)^2t^2}{1-(3x+1)t+(x^2-1)t^2}$ \\[2mm]
\hline
& \\[-3mm]
$A_{5,1}(x,t)$ & $\frac{1-3(x-1)t+2(x-1)^3t^3}{1-(3x+2)t+2(x-1)t^2+2(x+1)(x-1)^2t^3}$\\[2mm]
\hline
& \\[-3mm]
$A_{6,1}(x,t)$ & $\frac{1-4 (x-1)t+3(x-1)^2t^2+(x-1)^3t^3}{1-2(2x+1)t+(3x^2+2x-5)t^2+(x+1)(x-1)^2t^3}$\\[2mm]
\hline
& \\[-3mm]
$A_{7,1}(x,t)$ & $\frac{1-4(x-1)t+2(x-1)^2t^2+4(x-1)^3t^3-(x-1)^4t^4}{1-(4x+3)t-(7-5x-2x^2)t^2+(4x+5)(x-1)^2t^3-(x+2)(x-1)^3t^4}$
\end{tabular}
\end{center}
\caption{Distribution of the number of bonds on $\ell$-ary words, $\ell=3,...,7$.}\label{tab1}
\end{table}

\begin{tiny}

\begin{eqnarray*}
A_{3,1}(x,t) &=& 1+3 t+(2+7 x) t^2+\left(2+8 x+17 x^2\right) t^3+\left(2+10 x+28 x^2+41 x^3\right) t^4\\
&+& \left(2+12 x+42 x^2+88 x^3+99 x^4\right) t^5+\left(2+14
x+58 x^2+154 x^3+262 x^4+239 x^5\right) t^6\\
&+& \left(2+16 x+76 x^2+240 x^3+524 x^4+752 x^5+577 x^6\right) t^7\\
&+& \left(2+18 x+96 x^2+348 x^3+908 x^4+1692
x^5+2104 x^6+1393 x^7\right) t^8+\cdots;
\end{eqnarray*}

\begin{eqnarray*}
A_{4,1}(x,t) &=& 1+4 t+2 (3+5 x) t^2+2 \left(5+14 x+13 x^2\right) t^3+4 \left(4+17 x+26 x^2+17 x^3\right) t^4\\
&+& 2 \left(13+72 x+162 x^2+176 x^3+89 x^4\right)
t^5\\
&+& 2 \left(21+145 x+422 x^2+662 x^3+565 x^4+233 x^5\right) t^6\\
&+& 4 \left(17+140 x+503 x^2+1016 x^3+1239 x^4+876 x^5+305 x^6\right) t^7\\
&+& 2 \left(55+527
x+2247 x^2+5567 x^3+8717 x^4+8757 x^5+5301 x^6+1597 x^7\right) t^8+\cdots;
\end{eqnarray*}

\begin{eqnarray*}
A_{5,1}(x,t) &=& 1+5 t+(12+13 x) t^2+5 \left(6+12 x+7 x^2\right) t^3+\left(74+222 x+234 x^2+95 x^3\right) t^4\\
&+& \left(184+724 x+1134 x^2+824 x^3+259 x^4\right)
t^5 \\
&+& \left(456+2236 x+4574 x^2+4902 x^3+2750 x^4+707 x^5\right) t^6 \\
&+& \left(1132+6624 x+16800 x^2+23480 x^3+19290 x^4+8868 x^5+1931 x^6\right) t^7 \\
&+& \left(2808+19124
x+57696 x^2+99716 x^3+106666 x^4+71418 x^5+27922 x^6+5275 x^7\right) t^8+\cdots;
\end{eqnarray*}

\begin{eqnarray*}
A_{6,1}(x,t) &=& 1+6 t+4 (5+4 x) t^2+4 \left(17+26 x+11 x^2\right) t^3+2 \left(115+263 x+209 x^2+61 x^3\right) t^4\\
&+& 4 \left(195+590 x+696 x^2+378 x^3+85
x^4\right) t^5 \\
&+& 2 \left(1321+4987 x+7742 x^2+6218 x^3+2585 x^4+475 x^5\right) t^6 \\
&+& 2 \left(4477+20230 x+39031 x^2+41156 x^3+25211 x^4+8534 x^5+1329
x^6\right) t^7 \\
&+& 2 \left(15169+79871 x+183933 x^2+240507 x^3+193107 x^4+95997 x^5+27503 x^6+3721 x^7\right) t^8+\cdots;
\end{eqnarray*}

\begin{eqnarray*}
A_{7,1}(x,t) &=& 1+7 t+(30+19 x) t^2+\left(130+160 x+53 x^2\right) t^3+\left(562+1034 x+656 x^2+149 x^3\right) t^4 \\
&+& \left(2432+5940 x+5598 x^2+2416 x^3+421
x^4\right) t^5 \\
&+& \left(10520+32068 x+39942 x^2+25526 x^3+8400 x^4+1193 x^5\right) t^6 \\
&+& \left(45514+166236 x+257634 x^2+217088 x^3+105512 x^4+28172 x^5+3387
x^6\right) t^7 \\
&+& \left(196898+838274 x+1553178 x^2+1625554 x^3+1039904 x^4+409176 x^5+92190 x^6+9627 x^7\right) t^8+\cdots.
\end{eqnarray*}

\end{tiny}

As noted in the introduction, the number of 
permutations $\sg \in S_n$ such that 
$\bx[\sg]=0$ equals the  number of 
permutations $\sg \in S_n$ such that 
$\bond[\sg]=0$.  The same applies to words. Thus, plugging in $x=0$ in the functions in Table \ref{tab1}, one gets generating functions for avoidance of the 1-box pattern (alternatively, we can plug in $x=0$ in the matrix $\mathbb{A}_{\ell,1}$ in Theorem \ref{thm:1bWords} to get the most general case and to work out particular small values of $\ell$); in Table \ref{tab2}, we list initial values of the respective sequences indicating connections to the OEIS \cite{oeis}. In particular, the connection to the sequence A118649 led us to solving a conjecture of Mathar (published in \cite[A118649]{oeis}) to be discussed in Subsection \ref{subLEGO}.

\begin{table}
\begin{center}
\begin{tabular}{c|c}
$\ell$ & generating function for permutations which avoid the 1-box pattern \\
\hline
& \\[-3mm]
$A_{3,1}(0,t)$ & $\frac{1+2t -t^2}{1-t}$ \\[2mm]
\hline
& \\[-3mm]
$A_{4,1}(0.t)$ & $\frac{1+3t+t^2}{1-t-t^2}$ \\[2mm]
\hline
& \\[-3mm]
$A_{5,1}(0,t)$ & $\frac{1+3t-2t^3}{1-2t-2t^2+2t^3}$\\[2mm]
\hline
& \\[-3mm]
$A_{6,1}(0,t)$ & $\frac{1+4t+3t^2-t^3}{1-2t-5t^2+t^3}$\\[2mm]
\hline
& \\[-3mm]
$A_{7,1}(x,t)$ & $\frac{1+4t+2t^2-4t^3-t^4}{1-3t-7t^2+5t^3+2t^4}$
\end{tabular}
\end{center}
\caption{Distribution of $\ell$-ary words which avoid 
the 1-box pattern for  $\ell=3,...,7$.}\label{tab1.1}
\end{table}

\begin{table}
\begin{center}
\begin{tabular}{c|c|c}
$\ell$ & number of $\ell$-ary words avoiding the 1-box pattern & sequence in \cite{oeis} \\
\hline
3 & 1, 3, 2, 2, 2, 2, 2, 2, 2, 2, ... & \\
\hline
4 & 1, 4, 6, 10, 16, 26, 42, 68, 110, 178, ... & A006355, $n\geq 1$ \\
\hline
5 & 1, 5, 12, 30, 74, 184, 456, 1132, 2808, 6968, ... & A118649, $n\geq 1$ \\
\hline
6 & 1, 6, 20, 68, 230, 780, 2642, 8954, 30338, 102804, ... & \\
\hline
7 & 1, 7, 30, 130, 562, 2432, 10520, 45514, 196898, 851828, ... & \\
\end{tabular}
\end{center}
\caption{Avoidance of the 1-box patterns in $\ell$-ary words for lengths $n$ up to 9.}\label{tab2}
\end{table}

In \cite{KMMP}, Knopfmacher, Mansour, Munagi, and Prodinger 
studied generating functions for smooth $\ell$ words where 
a word $w = w_1 \ldots w_n \in [\ell]^n$ is smooth if $|w_i-w_{i+1}| \leq 1$ 
for $1 \leq i < n$. Thus in our notation, 
$w\in [\ell]^n$ is smooth if $\bond[w] =n-1$.  Let 
$M_{n,1,\ell}$ denote the number of $w \in [\ell]^n$ such 
that $\bond[w] = n-1$ and 
$sm_\ell(t) = 1+ \sum_{n \geq 1} M_{n,1,\ell}t^n$.  Then 
 Knopfmacher, Mansour, Munagi, and Prodinger \cite[Theorem 2.2]{KMMP} proved 
that 
\begin{equation}
sm_\ell(t) = 1+\frac{t(\ell -(3\ell +2)t)}{(1-3t)^2} + 
\frac{2t^2}{(1-3t)^2}\frac{1+U_{\ell -1}\left(\frac{1-t}{2t}\right)}{U_{\ell}\left(\frac{1-t}{2t}\right)}
\end{equation}
where $U_r(t)$ is the Chebyshev polynomial of the second kind defined 
by 
$$U_r(\cos(\theta)) = \frac{\sin((r+1)\theta)}{\sin(\theta)}.$$
Alternatively, one can define the polynomials by 
recursion by setting $U_0(t) =1$, $U_1(t) =2t$, $U_2(t) =4t^2 -1$, and 
$$U_r(t) =2tU_{r-1}(t) -U_{r-2}(t) \  \mbox{for} \ r \geq 3.$$

We can obtain the same generating functions from 
our generating function $B_{\ell,1}(x,t)$. That is, 
clearly 
$$B_{\ell,1}(1/x,xt) = 1 + \sum_{n \geq 1} \sum_{w \in [\ell]^n} 
x^{n -\bond[w]}t^n$$
so that 
$$C_{\ell,1}(x,t):= \frac{1}{x}(B_{\ell,1}(1/x,xt)-1) = 
\sum_{n \geq 1} \sum_{w \in [\ell]^n} 
x^{n -1-\bond[w]}t^n.$$
Hence 
\begin{equation*}
sm_\ell(t) = 1+ C_{\ell,1}(0,t).
\end{equation*}

\subsection{Distribution of 1-box patterns over words.}\label{1boxDist}

One can use similar methods to find 
the distribution of $\bx[w]$ for $w \in [\ell]^*$. In this 
case we have to keep track of more information. This 
is due to the fact that extra contribution to $\bx[w]$ caused 
by adding an extra letter at the front of a word $w$ depends on the first 
two letters of $w$.  For example, $\bx[12] =x^2t^2$ and $\bx[112] = x^3t^3$ 
so that adding 1 to the front of $w =12$ increased $x^{\bx[w]}t^{|w|}$ by 
a factor of $xt$.  However, $\bx[13] =t^2$ and $\bx[113] = x^2t^3$ 
so that adding 1 to the front of $w =13$ increased $x^{\bx[w]}t^{|w|}$ by 
a factor of $x^2t$.

For $1 \leq i,j \leq \ell$, let  
$$B_{\ell,1}^{(ij)} = \sum_{w \in ij[\ell]^*} WT(w)$$ 
where $WT(w) = x^{\bx[w]}t^{|w|}$ and $ij[\ell]^*$ denotes the set of words over $[\ell]$ that begin with letters $ij$.
For any statement $S$, let $\chi(S) = 1$ if $S$ is true and 
$\chi(S) =0$ if $S$ is false. Then we claim that 
for all $1 \leq i,j \leq \ell$, 
\begin{eqnarray}\label{1boxwdrec}
&&B_{\ell,1}^{(ij)}(x,t) = x^{2\chi(|i-j]\leq 1)}t^2 + \\
&&\sum_{k=1}^{\ell} 
(t\chi(|i-j| > 1) + xt\chi(|i-j| \leq 1)\chi(|j-k| \leq 1) + \nonumber \\
&& \ \ \ \ \ \ \ \ \ \ \ x^2t
\chi(|i-j| \leq 1)\chi(|j-k| > 1))B^{(jk)}_{\ell,1}(x,t). \nonumber 
\end{eqnarray}
That is, the words in $ij[\ell]^*$ are of the 
form $ij$ plus words $ijkv$ where $k \in [\ell]$ and 
$v \in [\ell]^*$. Now 
\begin{equation*}
WT[ij] = \begin{cases} t^2 & \mbox{ if } |i-j|>1  \mbox{ and} \\ 
x^2t^2 & \mbox{ if } |i-j|\leq 1.
\end{cases}
\end{equation*}
Similarly, 
\begin{equation*}
WT[ijkv] = \begin{cases} tWT[jkv] & \mbox{ if } |i-j|>1,  \\ 
xt WT[jkv] & \mbox{ if } |i-j|\leq 1 \mbox{ and }  |j-k|\leq   1,  \mbox{ and}\\x^2t WT[jkv] & \mbox{ if } |i-j|\leq 1 \mbox{ and }  |j-k|>  1.
\end{cases}
\end{equation*}

The set of equations of the form (\ref{1boxwdrec}) 
can be written out in matrix form. That is, 
let 
$\vec{B}_{\ell,1}$ be the row vector of length $\ell^2$ of
the $B^{(ij)}_{\ell,1}(t,x)$ where the elements are listed in 
the lexicographic order of the pairs $(ij)$. For example,  $\vec{B}_{3}$ 
equals 
{\tiny $$(B_{3,1}^{(11)}(x,t),B_{3,1}^{(12)}(x,t),B_{3,1}^{(13)}(x,t),B_{3,1}^{(21)}(x,t), 
B_{3,1}^{(22)}(x,t),B_{3,1}^{(23)}(x,t),B_{3,1}^{(31)}(x,t),B_{3,1}^{(32)}(x,t), B_{3,1}^{(33)}(x,t)).$$}
Similarly, let 
$\vec{I}_{\ell,1}$ be the row vector of length $\ell^2$ of the 
terms $t^2x^{2\chi(|i-j| \leq 1)}$ again listed in 
the lexicographic order on the pairs $ij$. For example, 
$$ \vec{I}_{3,1}=(x^2t^2,x^2t^2,t^2,x^2t^2,x^2t^2,x^2t^2,t^2,x^2t^2,x^2t^2).$$
Then one can write  a set of equations of the form (\ref{1boxwdrec}) in 
the form 
$$(\vec{I}_{\ell,1})^T = \mathbb{B}_{\ell,1} (\vec{B}_{\ell,1})^T$$
where 
$ \mathbb{B}_{\ell,1}$ is an $\ell^2\times \ell^2$ matrix. 
For example, $\mathbb{B}_{3,1}$ is the matrix

$$
\begin{pmatrix}
xt-1 & xt & x^2t & 0 & 0 & 0 & 0 & 0 & 0 \\
0  & -1 & 0  & xt & xt  & xt & 0 & 0 & 0 \\
0  & 0 & -1  & 0  & 0  & 0 & t & t & t \\
xt & xt & x^2t & -1 & 0 & 0 & 0 & 0 & 0 \\
0  & 0 & 0  & xt & xt-1  & xt & 0 & 0 & 0 \\
0  & 0 & 0  & 0  & 0  & -1 & x^2t & xt & xt \\
t & t & t & 0 & 0 & 0 & -1 & 0 & 0 \\
0 & 0  & 0 & 0  & xt & xt  & xt & -1 & 0 \\
0  & 0 & 0  & 0  & 0  & 0 & x^2t & xt & xt -1 \\
\end{pmatrix}.$$

Note that since 
setting $x=t=0$ in $ \mathbb{B}_{\ell,1}$ will gives 
an $\ell \times \ell$ diagonal matrix with $-1$s on 
the diagonal,  $ \mathbb{B}_{\ell,1}$ is invertible. Thus 
$$(\vec{B}_{\ell,1})^T = \mathbb{B}_{\ell,1}^{-1} (\vec{I}_{\ell,1})^T.$$
Let $\vec{1}_{\ell,1}$ denote the vector of length $\ell^2$ consisting 
of all 1s. Then 
$$\sum_{1 \leq i,j \leq \ell} B^{(ij)}_{\ell,1}(x,t) = 
\vec{1}_{\ell,1} \mathbb{B}_{\ell,1}^{-1} (\vec{I}_{\ell,1})^T.$$
Taking into account the empty word and all the 
words of length 1 will yeild the following theorem. 

\begin{theorem}\label{thm:1boxgf} For all $\ell \geq 2$, 
\begin{equation*}
B_{\ell,1}(x,t) = 1 + \ell t + \vec{1}_{\ell,1} 
\mathbb{B}_{\ell,1}^{-1} (\vec{I}_{\ell,1})^T.
\end{equation*}
\end{theorem}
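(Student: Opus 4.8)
The plan is to assemble the full generating function $B_{\ell,1}(x,t)$ from the contributions of words grouped by length, and then to substitute the already-solved linear system for the tail. Since $B_{\ell,1}(x,t) = \sum_{w \in [\ell]^*} WT(w)$ sums over \emph{all} words, I would first split $[\ell]^*$ according to whether $|w|=0$, $|w|=1$, or $|w| \ge 2$, so that
$$B_{\ell,1}(x,t) = \sum_{|w|=0} WT(w) + \sum_{|w|=1} WT(w) + \sum_{|w| \ge 2} WT(w).$$

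The first two sums are immediate. The empty word is the only word of length $0$ and contributes $x^0t^0 = 1$. A one-letter word $w = i$ has no adjacent letter, so no position matches the 1-box pattern; hence $\bx[w] = 0$ and $WT(i) = t$, giving $\sum_{|w|=1} WT(w) = \ell t$. For the tail, every word of length at least $2$ begins with a unique ordered pair of letters $(i,j)$ with $1 \le i,j \le \ell$, so the set of words of length at least $2$ is the disjoint union $\bigsqcup_{1 \le i,j \le \ell} ij[\ell]^*$. Therefore $\sum_{|w| \ge 2} WT(w) = \sum_{1 \le i,j \le \ell} B^{(ij)}_{\ell,1}(x,t) = \vec{1}_{\ell,1}(\vec{B}_{\ell,1})^T$, the latter being simply the sum of all entries of $\vec{B}_{\ell,1}$.

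It then remains to evaluate $\vec{1}_{\ell,1}(\vec{B}_{\ell,1})^T$ using the matrix equation derived above. Recursion (\ref{1boxwdrec}) exhibits $\vec{B}_{\ell,1}$ as a solution of the linear system $(\vec{I}_{\ell,1})^T = \mathbb{B}_{\ell,1}(\vec{B}_{\ell,1})^T$; the one point that deserves care is that this solution is \emph{unique} and given by $\mathbb{B}_{\ell,1}^{-1}$. Setting $t=0$ turns $\mathbb{B}_{\ell,1}$ into $-I_{\ell^2}$, so $\det \mathbb{B}_{\ell,1}$, viewed as a series in $t$ with coefficients in $\mathbb{Q}[x]$, has constant term $(-1)^{\ell^2}\neq 0$ and is thus invertible in the ring of formal power series in $t$; consequently $\mathbb{B}_{\ell,1}$ is invertible over that ring and the $B^{(ij)}_{\ell,1}(x,t)$, being power series solutions of the system, must coincide with $\mathbb{B}_{\ell,1}^{-1}(\vec{I}_{\ell,1})^T$. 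Hence $\sum_{1 \le i,j \le \ell} B^{(ij)}_{\ell,1}(x,t) = \vec{1}_{\ell,1}\mathbb{B}_{\ell,1}^{-1}(\vec{I}_{\ell,1})^T$, and substituting the three length strata into the decomposition gives $B_{\ell,1}(x,t) = 1 + \ell t + \vec{1}_{\ell,1}\mathbb{B}_{\ell,1}^{-1}(\vec{I}_{\ell,1})^T$, as claimed. The only genuine obstacle is this uniqueness/invertibility step over formal power series; everything else is bookkeeping of the length strata, and the per-pair recursion was already verified prior to the statement.
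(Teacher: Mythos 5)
Your proof is correct and follows essentially the same route as the paper: the paper likewise derives the theorem by partitioning words into the empty word (contributing $1$), the $\ell$ one-letter words (contributing $\ell t$), and the words of length at least $2$ grouped by their first two letters, and then solves the linear system $(\vec{I}_{\ell,1})^T = \mathbb{B}_{\ell,1}(\vec{B}_{\ell,1})^T$ by inverting $\mathbb{B}_{\ell,1}$, whose invertibility it justifies by the same specialization argument (the paper sets $x=t=0$ to get $-I$, you set $t=0$; both work since every off-diagonal entry carries a factor of $t$). Your added care about uniqueness of the power-series solution is a slight refinement of the paper's one-line justification, but not a different approach.
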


We have used Theorem \ref{thm:1boxgf} to compute 
$B_{\ell,1}(x,t)$ for $\ell =3,4, \ \mbox{and } 5$.

\begin{equation*}\label{eq:B3}
B_{3,1}(x,t)= \frac{1+2(1-x)t-(1+4x-5x^2)t^2+2x(1-x)^2t^3+x^2(1-x)^2t^4}{1-(1+2x)t+2x(1-x)t^2+x^2(1-x)t^3};
\end{equation*}

\begin{equation*}\label{eq:B4}
B_{4,1}(x,t) = \frac{1+3 (1-x)t +  \left(1-9 x+8 x^2\right)t^2 -3x(1-x)^2 t^3
+ x^2(1-x)^2 t^4}{1-(1+3x)t- \left(1-3 x+2 x^2\right) t^2 - x \left(3-4
x+x^2\right)t^3 - x^2(1-x)^2 t^4};
\end{equation*}

\begin{equation*}\label{eq:B5}
B_{5,1}(x,t) = \frac{f_{5,1}(x,t)}{g_{5,1}(x,t)}
\end{equation*}
where 
\begin{eqnarray*}
f_{5,1}(x,t) &=& 1+3  (1-x)t+9x(1-x) t^2 - 2(1-x)^2 (1+2 x)t^3 +\\
&&6 x (1-x)^2  (1+x)t^4-4(1-x)^3 x^3t^6 
\end{eqnarray*}
and 
\begin{eqnarray*}
g_{5,1}(x,t) &=& 1 - (2+3 x)t -\left(2-6 x+4 x^2\right)t^2 - 
\left(-2-6 x+8 x^2\right)t^3 -\\
&& 6(1-x)^2 x (1+x) t^4 -4 (1-x)^2 x^3t^5+ 4  (1-x)^3 x^3t^6. 
\end{eqnarray*}

Using the generating functions above, we have computed 
some of the initial terms in their Taylor series expansions. 

\begin{tiny}

\begin{eqnarray*}
&&B_{3,1}(x,t) = 1+3t+ (2+7 x^2) t^2 + (2+8 x^2+17 x^3)t^3+\\
&&(2+10 x^2+20 x^3+49 x^4)t^4 + (2+12 x^2+26 x^3+64 x^4+139 x^5)t^5 +\\
&&(2+14 x^2+32 x^3+88 x^4+200 x^5+393 x^6)t^6 +\\
&&(2+16 x^2+38
x^3+114 x^4+290 x^5+614 x^6+1113 x^7)t^7+ \\
&&(2+18 x^2+44 x^3+142 x^4+392 x^5+932 x^6+1880 x^7+3151 x^8)t^8 + \cdots. 
\end{eqnarray*}

\begin{eqnarray*}
&&B_{4,1}(x,t) =1+4t +(6+10 x^2)t^2+(10+28 x^2+26 x^3)t^3 + \\
&&(16+68 x^2+72 x^3+100 x^4)t^4+(26+144 x^2+174 x^3+338 x^4+342 x^5)t^5 + \\
&&(42+290 x^2+368 x^3+930 x^4+1256 x^5+1210x^6) t^6 + \\
&&(68+560 x^2+740 x^3+2232 x^4+3612 x^5+4932 x^6+4240 x^7)t^7+ \\
&&(110+1054 x^2+1428 x^3+4996 x^4+8984 x^5+15246 x^6+18820 x^7+14898 x^8)t^8 
+ \cdots,
\end{eqnarray*}

\begin{eqnarray*}
&&B_{5,1}(x,t) = 1+5 t+\left(12+13 x^2\right) t^2+5 \left(6+12 x^2+7 x^3\right) t^3+\\
&&\left(74+222 x^2+160 x^3+169 x^4\right) t^4+\left(184+724 x^2+592 x^3+974
x^4+651 x^5\right) t^5+\\
&&\left(456+2236 x^2+1932 x^3+4238 x^4+4048 x^5+2715 x^6\right) t^6+\\
&&\left(1132+6624 x^2+5968 x^3+16036 x^4+18372 x^5+18982 x^6+11011
x^7\right) t^7+\\
&&\left(2808+19124 x^2+17688 x^3+56072 x^4+71724 x^5+94282 x^6+83828 x^7+45099 x^8\right) t^8+ \cdots .
\end{eqnarray*}

\end{tiny}

\subsection{Solving three conjectures of Hardin.}\label{threeHardin}

Note that 
$$\overline{B}_{k,1}(x,t) := B_{k,1}[1/x,xt] = 
\sum_{w \in [k]^*} x^{n-(\bx[w])}t^n$$ 
so that $\overline{B}_{k,1}(0,t)$ is the generating function 
of all words in $w =w_1 \ldots w_n \in[k]^*$ such that 
 $\bx[w]=n$, i.e. each letter of $w$ differs from at least one neighbor by 1 or less. We have computed $\overline{B}_{k,1}(0,t)$ for 
$k = 3,4,5$. 
\begin{equation*}
\overline{B}_{3,1}(0,t) = \frac{1-2t+5t^2+2t^3+t^4}{1-2t-2t^2 -t^3}.
\end{equation*}
The initial terms of this series are 
$1,0,7,17,49,139,393,1113,3151,8921, \ldots$. This is the sequence 
A221591 which was apparently computed directly from its combinatorial 
definition 
by R. H. Hardin.  If  $\overline{B}_{3,1}(0,t) = \sum_{n \geq 0} 
b_{3,1,n}t^n$, then Hardin observed empirically that 
$b_{3,1,n} = 2b_{3,1,n-1}+  2b_{3,1,n-2}+ b_{3,1,n-3}$ for 
$n > 4$. This recursion follows immediately from the 
generating function for  $\overline{B}_{3,1}(0,t)$ so that we have 
proved Hardin's conjecture. 

\begin{equation*}
\overline{B}_{4,1}(0,t) = \frac{1-3t+8t^2-3t^3+t^4}{1-3t-2t^2+t^3-t^4}.
\end{equation*}
The initial terms of this series are 
$1,0,10,26,100,342,1210,4240,14898,52306,\ldots$. This is the sequence 
A221569 which was also computed directly form its combinatorial definition 
by R. H. Hardin.  If  $\overline{B}_{4,1}(0,t) = \sum_{n \geq 0} 
b_{4,1,n}t^n$, then Hardin observed empirically that 
$b_{4,1,n} = 3b_{4,1,n-1}+  2b_{4,1,n-2}- b_{4,1,n-3} + b_{4,1,n-4}$ for 
$n > 5$. This recursion follows immediately from the 
generating function for  $\overline{B}_{4,1}(0,t)$ so that we have 
also proved this conjecture of Hardin.

\begin{equation*}
\overline{B}_{5,1}(0,t) = \frac{1-3t+9t^2-4t^3+6t^4+4t^6}{1-3t-4t^2-6t^4-4t^5-4t^6}.
\end{equation*}
The initial terms of this series are 
$1,0,13,35,169,651,2715,11011,45099,184063,\ldots$. This is the sequence 
A221592 which was also computed directly form its combinatorial definition 
by R. H. Hardin.  If  $\overline{B}_{5,1}(0,t) = \sum_{n \geq 0} 
b_{5,1,n}t^n$, then Hardin observed empirically that 
$b_{4,1,n} = 3b_{4,1,n-1}+  4b_{4,1,n-2}+ 6b_{4,1,n-4} + 6b_{4,1,n-5} + 5b_{4,1,n-6}$ for 
$n > 6$. This recursion follows immediately from the 
the generating function for  $\overline{B}_{5,1}(0,t)$ so that we have 
also proved this conjecture of Hardin.

\subsection{Solving an enumerative conjecture on LEGO.}\label{subLEGO}

A ``stable LEGO wall'' is a wall in which seams do not match up from one level to the next. Stable LEGO walls of width 7 and heights 1 and 2 when using bricks of length 2, 3, and 4 can be found in Figure \ref{LEGO} (the numbers should be ignored there for the moment).

\begin{figure}[h]
\begin{center}
\includegraphics[scale=0.4]{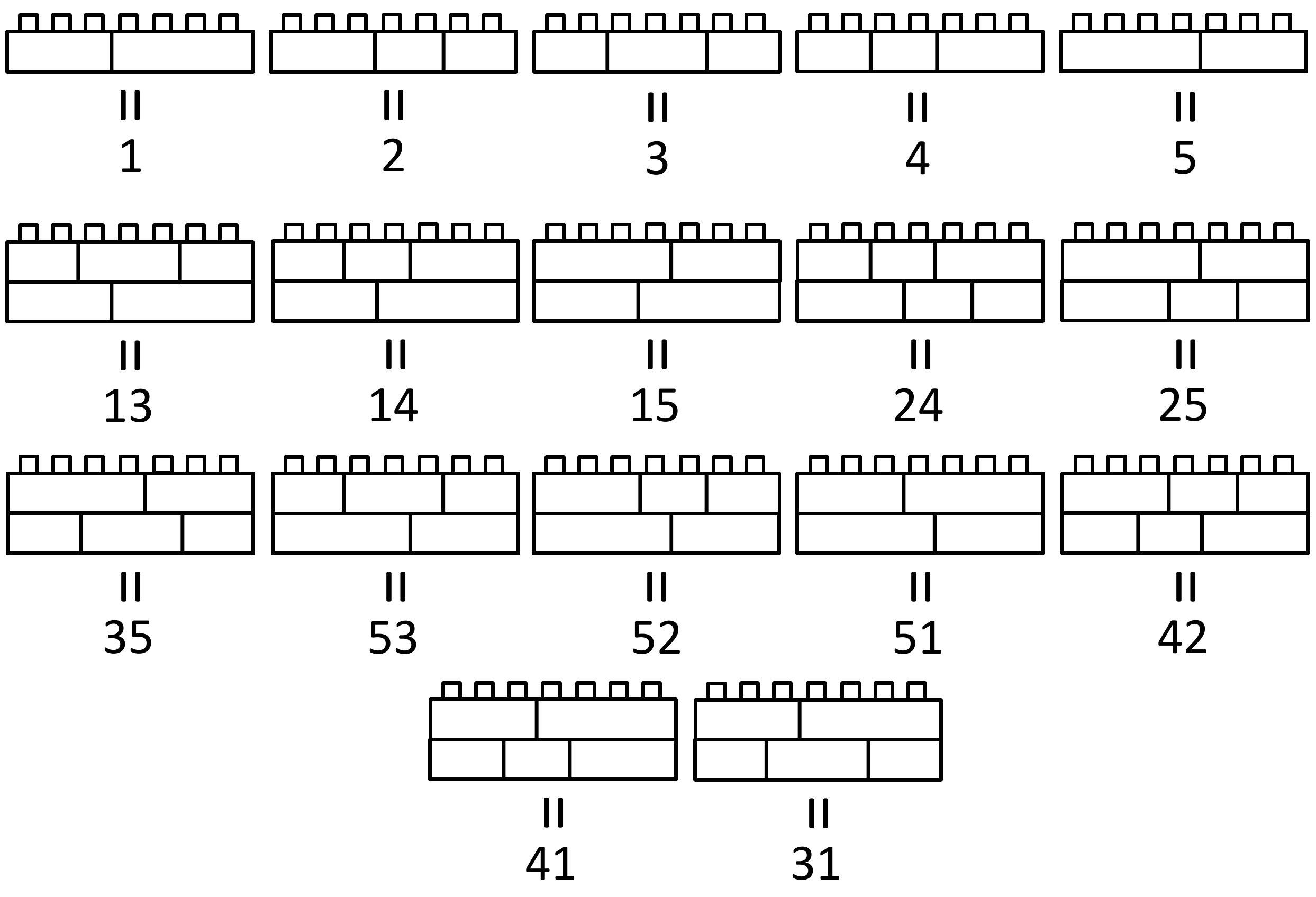}
\caption{Stable LEGO walls of width 7 and heights 1 and 2.}\label{LEGO}
\end{center}
\end{figure}

\begin{lemma}\label{bij1} There is a bijection between words over the alphabet $A=\{1,2,3,4,5\}$ of length $n$ that avoid the $1$-box pattern and stable LEGO walls of width $7$ and height $n$ when using bricks of length $2$, $3$, and $4$.\end{lemma}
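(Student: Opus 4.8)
The plan is to match the two families of objects level by level. A single level of a width-$7$ wall built from bricks of length $2$, $3$, $4$ is exactly a composition of $7$ into parts from $\{2,3,4\}$, and a direct count shows there are precisely five of these: the two-part compositions $(3,4)$ and $(4,3)$, and the three-part compositions $(2,2,3)$, $(2,3,2)$, $(3,2,2)$ (there is no one-part composition since $7\notin\{2,3,4\}$, and none with four or more parts since then the sum would be at least $8$). As the alphabet $A$ also has five letters, the shape of the argument is to fix a bijection between letters and levels and then lift it to stacks of levels.

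To each level I would attach its \emph{seam set}, the set of internal cut positions, that is, the proper partial sums of the composition: these are $\{3\}$, $\{4\}$, $\{2,4\}$, $\{2,5\}$, $\{3,5\}$ for the five levels above. A wall is stable exactly when consecutive levels share no seam, so a height-$n$ stable wall is precisely a sequence of $n$ levels in which adjacent levels have disjoint seam sets. The crucial point, to be verified by a finite check over the $\binom{5}{2}=10$ pairs, is that the relation ``the two seam sets meet'' is a path on the five levels. Indeed, ordering them as
$$1=(3,4),\quad 2=(3,2,2),\quad 3=(2,3,2),\quad 4=(2,2,3),\quad 5=(4,3),$$
the only pairs that share a seam are the consecutive pairs $(1,2)$, $(2,3)$, $(3,4)$, $(4,5)$ (sharing $3$, $5$, $2$, $4$ respectively), while every non-consecutive pair has disjoint seam sets. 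Hence, under this labeling, two distinct levels can be stacked if and only if their labels $a,b$ satisfy $|a-b|\geq 2$; and since each level has at least one seam, no level is stable directly above a copy of itself, matching the fact that equal labels have difference $0$.

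It remains to connect this with $1$-box avoidance. A word $w=w_1\cdots w_n\in A^n$ avoids the $1$-box pattern precisely when no letter has a neighbour within distance $1$, i.e.\ when $|w_i-w_{i+1}|\geq 2$ for all $i$ (equivalently $\bond[w]=0$, as already noted for words). Together with the previous paragraph, this shows that the map sending a stable wall of height $n$ to the word recording the labels of its levels from bottom to top is a bijection onto the length-$n$ words over $A$ avoiding the $1$-box pattern: it is a bijection on individual levels by construction, and it carries the stability constraint ``adjacent levels have disjoint seam sets'' exactly onto the avoidance constraint ``adjacent letters differ by at least $2$,'' in both directions.

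The only genuine work sits in the middle step: finding the labeling that turns ``share a seam'' into ``differ by exactly $1$'' and confirming that the sharing relation is a path rather than some other graph. This is a finite verification once the seam sets are in hand, so I do not expect a real obstacle; the one subtlety worth stating explicitly is the diagonal case, that a level placed on a copy of itself is always unstable, which is exactly what forbids equal consecutive letters.
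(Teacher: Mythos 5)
Your proposal is correct and is essentially the paper's argument: both encode the five admissible width-$7$ rows by the letters $1,\dots,5$ so that two rows can be stacked stably exactly when their labels differ by at least $2$, and then read a stable wall bottom-to-top as a $1$-box-avoiding word. Your version is in fact more self-contained than the paper's, which defines the encoding only via a figure and lists the prohibited factors as $12,23,34,45,54,43,32,21$ without explicitly mentioning the repeated-row case $ii$ (which your seam-set observation that every row has a nonempty seam set handles cleanly).
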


\begin{proof} Encode the eligible LEGO configurations of height 1 by the elements of $A$ as shown in Figure \ref{LEGO}, which gives a bijection between the objects in the case of $n=1$. 

More generally, given a word $w=w_1w_2\ldots w_n$ avoiding the 1-box pattern, we let the $i$-th level from below of the wall corresponding to $w$ be given by the configuration corresponding to the letter $w_i$ defined in Figure~\ref{LEGO}. For example, the correspondence for the case $n=2$ is shown in Figure~\ref{LEGO}. 

It is straightforward to check that the prohibited factors of words, namely 12, 23, 34, 45, 54, 43, 32, and 21, correspond to the prohibited configurations in LEGO, and vice versa.\end{proof}

Using Lemma \ref{bij1}, the function corresponding to $\ell=5$ and $x=0$ in Table \ref{tab1}, and taking care of the offset (removing the number 2 in the sequence \cite[A118649]{oeis} and shifting down the indices of the larger numbers), we can confirm a conjecture of R. J. Mathar that stable LEGO walls satisfying the assumptions of Lemma \ref{bij1} are counted by the following generating function:
$$\frac{1+3t-2t^3}{1-2t-2t^2+2t^3}.$$

\subsection{$(1,k)$-rectangle patterns for $k\geq 2$; solving a conjecture of Barker and enumerating two sequences of Hardin.} \label{secWords2}

Given a word $w =w_1 \ldots w_n \in [\ell]^n$, let 
$k\mbox{-}\bond[w] = |\{i: |w_i-w_{i+1}| \leq k\}|$. 
It is straightforward to generalize Theorems \ref{thm:1bWords}  and 
\ref {thm:1boxgf} to find the distribution of $k\mbox{-}\bond[w]$ and 
$(1,k)\mbox{-rec}(w)$, the number of $(1,k)$-rectangle patterns in $w$, over words $w$ in $[\ell]^*$. 
That is, we  claim 
that the  same method of proof can also be used to find 
the generating function 
$$A_{\ell,k} = \sum_{w \in [\ell]^*} x^{k\mbox{-}\bond[w]} t^{|w|} = 
\sum_{m,n \geq 0} a_{\ell,k}(m,n)x^m t^n$$
for $k \geq 2$. 
Thus $a_{\ell,k}(m,n)$ is the number of words $w \in [\ell]^n$ 
such that $k\mbox{-}\bond[w] =m$.  

Let 
$\mathbb{A}_{\ell,k}$ be the $\ell \times \ell$ matrix 
whose entries on the main diagonal consists of all $xt-1$'s, whose 
entries on the first $k$ superdiagonals and the first $k$ 
subdiagonals are $xt$, and whose remaining entries are $t$. 
Then we have the following theorem. 

\begin{theorem} \label{thm:kbonds}
For all 
$\ell,k \geq 1$, 
$$A_{\ell,k}(x,t) = 1+(\underbrace{1,\ldots,1}_{\ell})\mathbb{A}_{\ell,k}^{-1}(\underbrace{-t,\ldots,-t}_{\ell})^{T}.$$
\end{theorem}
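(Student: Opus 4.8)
The plan is to follow verbatim the proof of Theorem~\ref{thm:1bWords}, which is exactly the case $k=1$, replacing the condition $|i-j| \leq 1$ by $|i-j| \leq k$ throughout. First I would introduce, for each $1 \leq i \leq \ell$, the refined generating function
$$A_{\ell,k}^{(i)}(x,t) = \sum_{w \in i[\ell]^*} x^{k\mbox{-}\bond[w]} t^{|w|}$$
that tracks words beginning with the letter $i$, so that, after accounting for the empty word,
$$A_{\ell,k}(x,t) = 1 + \sum_{i=1}^{\ell} A_{\ell,k}^{(i)}(x,t).$$

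The second step is to derive the defining linear system for the $A_{\ell,k}^{(i)}$. Every nonempty word beginning with $i$ is either the single letter $i$, contributing $t$, or is obtained by prepending $i$ to a word beginning with some letter $j$. The key structural observation---here even simpler than in the permutation setting, because a $k$-bond depends only on a pair of adjacent letters---is that prepending $i$ leaves every existing adjacent pair untouched and creates exactly one new adjacent pair $ij$, which is a $k$-bond precisely when $|i-j| \leq k$. Hence prepending $i$ multiplies the weight by $xt$ when $|i-j| \leq k$ and by $t$ otherwise, giving
$$A_{\ell,k}^{(i)}(x,t) = t + \sum_{j=1}^{\ell} \bigl(xt\,\chi(|i-j| \leq k) + t\,\chi(|i-j| > k)\bigr)\, A_{\ell,k}^{(j)}(x,t).$$

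The third step is to recast this system in matrix form. Moving all the $A_{\ell,k}^{(j)}$ terms to the left-hand side, the coefficient of $A_{\ell,k}^{(j)}$ in the $i$th equation becomes $xt-1$ when $i=j$, becomes $xt$ when $1 \leq |i-j| \leq k$ (the first $k$ super- and subdiagonals), and becomes $t$ when $|i-j| > k$ (the remaining entries)---that is, precisely the matrix $\mathbb{A}_{\ell,k}$. Writing $\vec{A} = (A_{\ell,k}^{(1)}, \ldots, A_{\ell,k}^{(\ell)})$, the system reads $\mathbb{A}_{\ell,k}\,\vec{A}^T = (\underbrace{-t,\ldots,-t}_{\ell})^T$. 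Since setting $x=t=0$ turns $\mathbb{A}_{\ell,k}$ into $-I_{\ell}$, its determinant is a formal power series in $t$ with nonzero constant term, so $\mathbb{A}_{\ell,k}$ is invertible over the ring of power series; solving yields $\vec{A}^T = \mathbb{A}_{\ell,k}^{-1}(\underbrace{-t,\ldots,-t}_{\ell})^T$, and substituting into the expression for $A_{\ell,k}$ gives the claimed formula.

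I do not anticipate a genuine obstacle, since the argument is a one-parameter generalization of an already-proved theorem. The only point that warrants brief care is the locality claim in the second step---that prepending a single letter alters only the leading adjacent pair and so contributes a single factor of $xt$ or $t$; once that is stated cleanly, the remainder is bookkeeping together with the standard invertibility remark inherited from the $k=1$ case.
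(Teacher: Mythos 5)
Your proposal is correct and follows essentially the same route as the paper: both introduce the refined series $A_{\ell,k}^{(i)}$ over words starting with the letter $i$, derive the linear system by prepending a letter (weight $xt$ when $|i-j|\leq k$ and $t$ otherwise, plus the $t$ for the one-letter word), and solve it via the matrix $\mathbb{A}_{\ell,k}$ exactly as in Theorem~\ref{thm:1bWords}. Your explicit justification of invertibility by setting $x=t=0$ is a small additional nicety the paper states only for the analogous matrices $\mathbb{B}_{\ell,k}$.
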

\begin{proof} 
For $i =1, 2,\ldots, \ell$, let 
$$A^{(i)}_{\ell,k}(x,t) = \sum_{w \in i[\ell]^*} x^{k\mbox{-}\bond[w]} t^{|w|} = 
\sum_{m,n \geq 0} a^{(i)}_{\ell,k}(m,n)x^m t^n.$$
When $k \geq 2$, we can follow the proof of Theorem \ref{thm:1bWords} and 
find simple recurrences for the functions $A^{(i)}_{\ell,k}(x,t)$. 
Indeed, in this case we may have more possibilities to create an occurrence of the $k$-box pattern while adjoining letter $i$ from the left side, so that in the terminology of the proof of Theorem \ref{thm:1bWords}, 
\begin{eqnarray*}
A_{\ell,k}^{(i)}(x,t) &=& t+tA_{\ell,k}^{(1)}(x,t) + \cdots + tA_{\ell,k}^{(i-k-1)}(x,t) + \\
&&xt A_{\ell}^{(i-k)}(x,t) + \cdots +xt A_{\ell,k}^{(i+k)}(x,t) + \\
&&t A_{\ell,k}^{(i+k+1)}(x,t)+\cdots  + t A_{\ell}^{(\ell)}(x,t).
\end{eqnarray*}
 Thus, for an arbitrary $k$, the first row in the matrix $A$ in Theorem \ref{thm:1bWords} is the vector 
$$(xt-1,\underbrace{xt,\ldots,xt}_{k},t,\ldots,t),$$ the second row is the vector $$(xt,xt-1,\underbrace{xt,\ldots,xt}_{k},t,\ldots,t),$$ and, more generally, any row in $A$ in this case is of the form $$(t,\ldots,t,\underbrace{xt,\ldots,xt}_{k},xt-1,\underbrace{xt,\ldots,xt}_{k},t,\ldots,t).$$
\end{proof}

For example, the generating function $A_{\ell,2}(x,t)$ is equal to $$1+(\underbrace{1,\ldots,1}_{\ell})A_{\ell,2}^{-1}(\underbrace{-t,\ldots,-t}_{\ell})^{T}$$ where $A_{\ell,2}$ is the following $\ell\times\ell$ matrix: 

$$\mathbb{A}_{\ell,2}=\left(\begin{array}{ccccccccccc} 
   xt-1 & xt         &     xt      &       t   & t  &t  &t       & \cdots &t & t & t \\ 
   xt     & xt-1     &    xt     &       xt   & t   &t   &t      & \cdots &t & t & t \\ 
   xt       & xt         &    xt-1 &       xt & xt  &t  &t        & \cdots &t & t & t \\ 
 t       &  xt         &    xt &       xt-1 & xt   & xt &t       & \cdots &t & t & t \\ 
\vdots & \vdots & \vdots & \vdots & \vdots & \vdots & \vdots & \ddots & \vdots 
& \vdots & \vdots \\ 
    t      &       t    &     t      &    t   &t  &t   &  t & \cdots  & xt&     xt    &  xt-1 \end{array}\right).$$

We have used Theorem \ref{thm:kbonds} to compute 
the generating functions $A_{\ell,2}(x,t)$ for $\ell = 4,5,6, 7$.

\begin{table}
\begin{center}
\begin{tabular}{c|c}
$\ell$ & generating function  $A_{\ell,2}(x,t)$ for $\ell = 4,5,6,7$.\\
\hline
& \\[-3mm]
 4 & $\frac{1-3 t (-1+x)-2 t^2 (-1+x)^2}{1-t-3 t x-2 t^2 (-1+x) x}$ \\[2mm]
\hline
& \\[-3mm]
5 & $\frac{1-4 t (-1+x)+t^3 (-1+x)^3}{1+t^2 (-1+x)+t^3 (-1+x)^2 x-t (1+4 x)}$\\[2mm]
\hline
& \\[-3mm]
 6 & $\frac{1-4 t (-1+x)-t^2 (-1+x)^2+t^3 (-1+x)^3}{1-t^2 (-1+x)^2+t^3 (-1+x)^2 (1+x)-2 t (1+2 x)}$\\[2mm]
\hline
& \\[-3mm]
 7 & $\frac{1-5 t (-1+x)+2 t^2 (-1+x)^2+4 t^3 (-1+x)^3-2 t^4 (-1+x)^4}{1-2 t^4 (-1+x)^3 (1+x)+2 t^3 (-1+x)^2 (1+2 x)-t (2+5 x)+2 t^2
\left(-2+x+x^2\right)}$
\end{tabular}
\end{center}
\caption{Distribution of the $2\mbox{-bond}$ on $\ell$-ary words, $\ell=4,5,6,7$.}\label{tab12}
\end{table}

\begin{tiny}

\begin{eqnarray*}
A_{4,2}(x,t) &=& 1+4 t+2  (1+7 x)t^2+2  \left(1+6 x+25 x^2\right)t^3+2  \left(1+7 x+31 x^2+89 x^3\right)t^4+ \\
&&2 \left(1+8 x+42 x^2+144 x^3+317 x^4\right) t^5 +2
 \left(1+9 x+54 x^2+222 x^3+633 x^4+1129 x^5\right)t^6+\\
&&2  \left(1+10 x+67 x^2+316 x^3+1095 x^4+2682 x^5+4021 x^6\right)t^7+\\
&&2  \left(1+11 x+81
x^2+427 x^3+1707 x^4+5145 x^5+11075 x^6+14321 x^7\right)t^8+\cdots;
\end{eqnarray*}

\begin{eqnarray*}
A_{5,2}(x,t) &=& 1+5 t+ (6+19 x)t^2+5  \left(2+8 x+15 x^2\right)t^3+ \left(16+88 x+226 x^2+295 x^3\right)t^4+ \\
&& \left(26+176 x+606 x^2+1156 x^3+1161 x^4\right)t^5+
\left(42+342 x+1428 x^2+3644 x^3+5600 x^4+4569 x^5\right)t^6+\\
&& \left(68+644 x+3170 x^2+9840 x^3+20250 x^4+26172 x^5+17981 x^6\right)t^7+\\
&& \left(110+1190
x+6708 x^2+24456 x^3+61446 x^4+106686 x^5+119266 x^6+70763 x^7\right)t^8
+\cdots;
\end{eqnarray*}

\begin{eqnarray*}
A_{6,2}(x,t) &=& 1+6 t+12  (1+2 x)t^2+4 \left(7+22 x+25 x^2\right) t^3+ \left(62+294 x+522 x^2+418 x^3\right)t^4+\\
&&4 \left(35+214 x+552 x^2+706 x^3+437x^4\right)t^5+
2 \left(157+1191 x+3926 x^2+7154 x^3+7245 x^4+3655 x^5\right)t^6+\\
&& \left(706+6364 x+25702 x^2+59624 x^3+85166 x^4+71804 
x^5+30570 x^6\right)t^7+\\
&&2
 \left(793+8295 x+39525 x^2+111571 x^3+202491 x^4+239637 x^5+173575 x^6+63921 x^7\right)t^8+\cdots;
\end{eqnarray*}

\begin{eqnarray*}
A_{7,2}(x,t) &=& 1+7 t+ (20+29 x)t^2+ \left(62+156 x+125 x^2\right)t^3+
 \left(186+710 x+962 x^2+543 x^3\right)t^4+\\
&& \left(566+2820 x+5658 x^2+5400 x^3+2363 x^4\right)t^5+ 
\left(1712+10648 x+27710 x^2+38526 x^3+28766 x^4+10287 x^5\right)t^6+\\
&& \left(5192+38520 x+124086 x^2+222928 x^3+239930 x^4+148100 x^5+44787
x^6\right)t^7+\\
&& \left(15728+135852 x+519888 x^2+1149548 x^3+1594738 x^4+1409754 x^5+744298 x^6+194995 x^7\right)t^8+\cdots.
\end{eqnarray*}

\end{tiny}

\begin{table}
\begin{center}
\begin{tabular}{c|c}
$\ell$ & generating function for permutations which avoid the $(1,2)$-rectangle pattern \\
\hline
& \\[-3mm]
$A_{4,2}(0,t)$ & $\frac{1+3t-2t^2}{1-t}$ \\[2mm]
\hline
& \\[-3mm]
$A_{5,2}(0,t)$ & $\frac{1+4t-t^3}{1-t-t^2}$ \\[2mm]
\hline
& \\[-3mm]
$A_{6,2}(0,t)$ & $\frac{1+4t-t^2-t^3}{1-2t-t^2+t^3}$\\[2mm]
\hline
& \\[-3mm]
$A_{7,2}(0,t)$ & $\frac{1+5t+2t^2-4t^3-2t^4}{1-2t-4t^2+2t^3+2t^4}$
\end{tabular}
\end{center}
\caption{Enumeration of $\ell$-ary words which avoid 
the $(1,2)$-rectangle pattern for  $\ell=4,5,6,7$.}\label{tab1.12}
\end{table}

Clearly the number of words $w \in [\ell]^n$ such 
that $k\mbox{-}\bond[w]=0$ equals the number of words $w \in [\ell]^n$ such 
that $(1,k)\mbox{-rec}(w)=0$. 
Plugging in $x=0$ in the functions in Table \ref{tab12} one gets generating functions for avoidance of the $(1,2)$-rectangle pattern. In Table \ref{tab22},  
we list initial values of the respective sequences indicating connections to the OEIS \cite{oeis}.

\begin{table}
\begin{center}
\begin{tabular}{c|c|c}
$\ell$ & number of $\ell$-ary words avoiding the (1,2)-rectangle pattern & sequence in \cite{oeis} \\
\hline
4 & 1, 4, 2, 2, 2, 2, 2, 2, 2, 2, ... & \\
\hline
5 & 1, 5, 6, 10, 16, 26, 42, 68, 110, 178, ... & A006355, $n\geq 2$ \\
\hline
6 & 1, 6, 12, 28, 62, 140, 314, 706, 1586, 3564, ... & A052994, $n \geq 2$\\
\hline
7 & 1, 7, 20, 62, 186, 566, 1712, 5192, 15728, 47688, ... & \\
\end{tabular}
\end{center}
\caption{Avoidance of the (1,2)-rectangle patterns in $\ell$-ary words for lengths $n$ up to 9.}\label{tab22}
\end{table}

We note that the sequence A052994 has no combinatorial interpretation in 
the OEIS so now we have given a combinatorial interpretation to 
this sequence. Also, comparing Tables \ref{tab2} and \ref{tab22}, and using an interpretation of \cite[A006355]{oeis}, one has the truth of the following proposition that we explain combinatorially.

\begin{proposition} For $n\geq 2$, the following objects are equinumerous:
\begin{itemize}
\item[(i)] words of length $n$ over the alphabet $[5]$ that avoid the $(1,2)$-rectangle pattern;
\item[(ii)] words of length $n$ over the alphabet $[4]$ that avoid the $1$-box pattern;
\item[(iii)] binary words of length $n+3$ that contain no singletons, that is, any $0$ has a $0$ staying next to it, and any $1$ has a $1$ staying next to it.
\end{itemize}
Thus, according to \cite[A006355]{oeis}, any of these objects is counted by $F_{n-1}+F_{n+2}$ where $F_{n}$ is the $n$th Fibonacci number defined as $F_0=F_1=1$ and $F_n=F_{n-1}+F_{n-2}$.
\end{proposition}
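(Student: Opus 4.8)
The plan is to produce two explicit bijections---one identifying family (i) with family (ii), and one identifying family (ii) with family (iii)---and then to read off the common cardinality, the equality with $F_{n-1}+F_{n+2}$ following from the elementary identity $F_{n-1}+F_{n+2}=2F_{n+1}$ (with $F_0=F_1=1$). First I would record the reformulation of avoidance: since $(1,k)\mbox{-rec}(w)=0$ means that \emph{every} adjacent pair of letters differs by more than $k$, family (ii) is exactly the set of $w\in[4]^n$ with $|w_i-w_{i+1}|\geq 2$ for all $i$, and family (i) is the set of $w\in[5]^n$ with $|w_i-w_{i+1}|\geq 3$ for all $i$. For (i)$\leftrightarrow$(ii) the key observation is that for $n\geq 2$ the letter $3$ can never occur in a word of family (i), since any neighbour of a $3$ would have to differ from it by at least $3$, impossible inside $[5]$. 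Hence such words live in $\{1,2,4,5\}^n$, and the strictly increasing relabelling $1,2,4,5\mapsto 1,2,3,4$ carries the allowed-transition graph of $\{1,2,4,5\}$ under ``difference $\geq 3$'' isomorphically onto that of $\{1,2,3,4\}$ under ``difference $\geq 2$'' (the three adjacencies $1\sim 4,\ 1\sim 5,\ 2\sim 5$ map to $1\sim 3,\ 1\sim 4,\ 2\sim 4$). This relabelling is the desired bijection, and its failure at $n=1$ ($5$ words versus $4$) explains the hypothesis $n\geq 2$.

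The heart of the argument is (ii)$\leftrightarrow$(iii), and here I would first extract the structure of a difference-$\geq 2$ word $w$ over $[4]$. Every admissible transition passes between the \emph{low} block $\{1,2\}$ and the \emph{high} block $\{3,4\}$, so $w$ strictly alternates between the two blocks and the entire block pattern is fixed by its first letter ($2$ choices). Next I would attach to each position a \emph{type}: $\mathrm{F}$ if the letter is extreme ($1$ or $4$) and $\mathrm{f}$ if it is middle ($2$ or $3$). A short check of the eight possibilities shows that, once the alternating block pattern is fixed, a transition is admissible \emph{unless} both of its endpoints have type $\mathrm{f}$; consequently the type word $\tau\in\{\mathrm{F},\mathrm{f}\}^n$ has no two consecutive $\mathrm{f}$'s, and conversely every pair (first block, such a $\tau$) reconstructs a unique $w$. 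Thus family (ii) is in bijection with $\{\text{low},\text{high}\}\times\{\tau\in\{\mathrm{F},\mathrm{f}\}^n:\text{no }\mathrm{ff}\}$, a set of size $2F_{n+1}$.

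Finally I would match this with family (iii). A binary word of length $n+3$ with no singleton is determined by its first bit ($2$ choices) together with its run-length sequence, which is a composition of $n+3$ into parts $\geq 2$; and the map sending a type word $\tau$ to the break-set $\{\,i+1:\tau_i=\mathrm{f}\,\}$ is a bijection between no-$\mathrm{ff}$ words of length $n$ and compositions of $n+3$ into parts $\geq 2$. (The no-$\mathrm{ff}$ condition becomes non-adjacency of breaks, i.e.\ all inner parts $\geq 2$, and the shift of the index range $[n]$ to $\{2,\dots,n+1\}$ forces the first and last parts to be $\geq 2$ as well.) Composing these maps, with the block choice matched to the first bit, yields the bijection (ii)$\leftrightarrow$(iii); both families then have cardinality $2F_{n+1}=F_{n-1}+F_{n+2}$, as claimed.

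I expect the main obstacle to be the structural decomposition of the second paragraph: recognising the low/high alternation and the extreme/middle (``free''/``forced'') dichotomy, and in particular verifying that the \emph{only} forbidden local pattern is $\mathrm{ff}$, so that the pair (first block, no-$\mathrm{ff}$ type word) is a faithful and surjective encoding of family (ii). Once this is established, the relabelling for (i)$\leftrightarrow$(ii) and the break-set correspondence for (ii)$\leftrightarrow$(iii) are routine, as is the concluding Fibonacci bookkeeping.
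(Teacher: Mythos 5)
Your proof is correct, and while the underlying maps coincide with the paper's, your argument is organized quite differently and is in fact more complete. For (i)$\leftrightarrow$(ii) you and the paper do the same thing: observe that $3$ cannot occur and relabel $4\mapsto 3$, $5\mapsto 4$ (your explicit check that the two transition graphs are isomorphic is a welcome addition). For (ii)$\leftrightarrow$(iii) the paper simply writes down the map --- first two bits $00$ or $11$ according to whether $w_1\in\{1,2\}$ or $\{3,4\}$, then copy the previous bit when $w_i\in\{1,4\}$ and flip it when $w_i\in\{2,3\}$, finally duplicate the last bit --- and then explicitly declines to prove it is a bijection, offering only ``a couple of remarks.'' Your factorization closes that gap: you show family (ii) is faithfully encoded by a block choice (low/high) together with a type word in $\{\mathrm{F},\mathrm{f}\}^n$ with no $\mathrm{ff}$ (the low/high alternation and the verification that $\mathrm{ff}$ is the \emph{only} forbidden local pattern are exactly the checks the paper waves at), and you show family (iii) is faithfully encoded by a first bit together with a composition of $n+3$ into parts $\geq 2$, with the break-set map matching the two encodings. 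Composing your three encodings reproduces the paper's map verbatim (the flips occur precisely at the $\mathrm{f}$-positions, shifted by one, and the boundary duplications account for the first and last parts being $\geq 2$), so you have in effect supplied the missing bijectivity proof. You also derive the count $2F_{n+1}=F_{n-1}+F_{n+2}$ directly from the composition/no-$\mathrm{ff}$ enumeration rather than citing the OEIS entry, which makes the final claim self-contained.
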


\begin{proof} Equinumeration of (i) and (ii) follows directly from the observation that the letter 3 never appears in words described by (i), so that we can take any such word, make the substitution of letters $4\rightarrow 3$ and $5\rightarrow 4$ to get a proper word described by (ii); this operation is clearly reversible. 

Equinumeration of (ii) and (iii) is established by the following bijective map from (ii) to (iii). Let a word $w=w_1w_2\ldots w_n$ described by (ii) is given and we want to obtain its binary image $u=u_1u_2\ldots u_{n+3}$. If $w_1\in\{1,2\}$ then $u_1u_2=00$; if $w_1\in\{3,4\}$ then $u_1u_2=11$. Also, no matter what $u_{n+2}$ is, we set $u_{n+3}=u_{n+2}$. To recover the letters $u_3, u_4, \ldots, u_{n+2}$, we read $w$ from left to right letter by letter: if $w_i\in\{1,4\}$, then $u_{i+2}=u_{i+1}$;  if $w_i\in\{2,3\}$, then $u_{i+2}\neq u_{i+1}$. For example, the word 3413142 avoiding the 1-box pattern is mapped to 1100011100. In Table \ref{tabMap}, we provide our map for all words of length $n=2,3$. 

We do not provide a proper proof of the fact that the map described by us from (ii) to (iii) is a bijection just giving a couple of remarks why this is the case. Indeed, if $w_i\in\{2, 3\}$ and $i<n$ then $w_{i+1}\in\{1,4\}$ and thus $u_{i+2}=u_{i+3}$. This, together with the fact that $u_{n+3}=u_{n+2}$ makes sure that $u$ has no singletons. \end{proof}

\begin{table}
\begin{center}
\begin{tabular}{|c|c|c|c|c|}
\hline
13 & 00011 & \ & 131 & 000111\\
\hline 
14 & 00000 & \ & 141 & 000000 \\
\hline 
24 & 00111 & \ & 142 & 000011 \\
\hline 
31 & 11000 & \ & 241 & 001111\\
\hline 
41 & 11111 & \ & 242 & 001100 \\
\hline 
42 & 11100 & \ & 313 & 110011\\
\hline 
& & \ & 314 & 110000 \\
\hline 
& & \ & 413 & 111100 \\
\hline 
& & \ & 414 & 111111 \\
\hline 
& & \ & 424 & 111000 \\
\hline 
\end{tabular}
\end{center}
\caption{Mapping $1$-box avoiding permutations over $[4]$ to binary strings without singletons.}\label{tabMap}
\end{table}

We say that
a word $w = w_1 \ldots w_n \in [\ell]^n$ is $k$-{\em smooth} if $|w_i-w_{i+1}| \leq k$ 
for $1 \leq i < n$. Thus in our notation, 
$w\in [\ell]^n$ is $k$-smooth if $k\mbox{-}\bond[w] =n-1$.  Let 
$M_{n,k,\ell}$ denote the number of $w \in [\ell]^n$ such 
that $k\mbox{-}\bond[w] = n-1$ and 
$sm_{\ell,k}(t) = 1+ \sum_{n \geq 1} M_{n,k,\ell}t^n$.  Clearly,
$$B_{\ell,k}(1/x,xt) = 1 + \sum_{n \geq 1} \sum_{w \in [\ell]^n} 
x^{n -k\mbox{-}\bond[w]}t^n$$
so that 
$$C_{\ell,k}(x,t)= \frac{1}{x}(B_{\ell,k}(1/x,xt)-1)
\sum_{n \geq 1} \sum_{w \in [\ell]^n} 
x^{n -1-k\mbox{-}\bond[w]}t^n.$$
Hence 
\begin{equation*}
sm_{\ell,k}(t) = 1+ C_{\ell,k}(0,t).
\end{equation*}

\begin{table}
\begin{center}
\begin{tabular}{c|c}
$\ell$ & generating function for words $w \in [\ell]^n$ such 
that $2\mbox{-}\bond[w] =n-1$.  \\
\hline
& \\[-3mm]
$sm_{4,2}(t)$ & $\frac{1+t}{1-3t-2t^2}$ \\[2mm]
\hline
& \\[-3mm]
$sm_{5,2}(t)$ & $\frac{1+t-t^2}{1-4t+t^3}$ \\[2mm]
\hline
& \\[-3mm]
$sm_{6,2}(t)$ & $\frac{1+2t-t^2-t^3}{1-4t-t^2+t^3}$\\[2mm]
\hline
& \\[-3mm]
$sm_{7,2}(t)$ & $\frac{1+2t-4t^2-2t^3+2t^4}{1-5t+2t^2+4t^3-2t^4}$
\end{tabular}
\end{center}
\caption{Distribution of words $w \in [\ell]^n$ such 
that $2\mbox{-}\bond[w]=n-1$,  $\ell=4,5,6,7$.}\label{tab1.123}
\end{table}

We have used our generating functions for $B_{\ell,2}(x,t)$ to 
compute $sm_{\ell,2}(t)$ for $\ell=4,5,6,7$, which we record in Table \ref{tab223}. In the case $\ell=4$, our objects match a combinatorial interpretation for the sequence A055099. For the sequence A126392, the generating function 
$sm_{5,2}(t) = \frac{1+t-t^2}{1-4t+t^3}$ was conjectured by 
Colin Barker, so we have proved his conjecture. The sequences 
 A126393 and  A126394 were apparently computed 
from their combinatorial definitions by R. H. Hardin, 
so that we now have found explicit formulas for their generating functions.

\begin{table}
\begin{center}
\begin{tabular}{c|c|c}
$\ell$ & number of words $w \in [\ell]^n$ such 
that $2\mbox{-}\bond[w]=n-1$  & sequence in \cite{oeis} \\
\hline
4 & 1, 4, 14, 50, 178, 634, 2258, 8042, 28642,102010, ... & A055099, $n \geq 0$\\
\hline
5 & 1, 5, 19, 75, 295, 1161, 4569, 17981, 70763, 278483, ... & A126392, $n\geq 0$ \\
\hline
6 & 1, 6, 24, 100, 418, 1748, 7310, 30570, 127842, 534628, ... & A126393, $n \geq 0$\\
\hline
7 & 1, 7, 29, 125, 543, 2363, 10287, 44787, 194995, 848979, ... & A126394, $n \geq 0$\\
\end{tabular}
\end{center}
\caption{Number of words $w \in [\ell]^n$ such that $k\mbox{-}\bond[w]=n-1$ for  $n$ up to 9.}\label{tab223}
\end{table}

One can also modify the proof of Theorem \ref{thm:1boxgf} to 
find the generating function for the distribution of 
$(1,k)\mbox{-rec}(w)$ for $w \in [\ell]^*$. That is, 
suppose $k \geq 2$, and for $1 \leq i,j \leq \ell$,  
$$B_{\ell,k}^{(ij)} = \sum_{w \in ij[\ell]^*} WT_k(w)$$ 
where $WT_k(w) = x^{(1,k)\mbox{-rec}(w)}t^{|w|}$.
 Then we claim that 
for all $1 \leq i,j \leq \ell$, 
\begin{eqnarray}\label{kboxwdrec}
&&B_{\ell,k}^{(ij)}(x,t) = x^{2\chi(|i-j]\leq k)}t^2 + \\
&&\sum_{k=1}^{\ell} 
(t\chi(|i-j| > k) + xt\chi(|i-j| \leq k)\chi(|j-k| \leq k) + \nonumber \\
&& \ \ \ \ \ \ \ \ \ \ \ x^2t
\chi(|i-j| \leq k)\chi(|j-k| > k))B^{(jk)}_{\ell,k}(x,t). \nonumber 
\end{eqnarray}
That is, the words in $ij[\ell]^*$ are of the 
form $ij$ plus words $ijmv$ where $m \in [\ell]$ and 
$v \in [\ell]^*$. Now 
\begin{equation*}
WT_k[ij] = \begin{cases} t^2 & \mbox{ if } |i-j|>k  \mbox{ and} \\ 
x^2t^2 & \mbox{ if } |i-j|\leq k.
\end{cases}
\end{equation*}
Similarly, 
\begin{equation*}
WT_k[ijkv] = \begin{cases} tWT_k[jkv] & \mbox{ if } |i-j|>k,  \\ 
xt WT_k[jkv] & \mbox{ if } |i-j|\leq k \mbox{ and }  |j-k|\leq   k,  \mbox{ and}\\x^2t WT_k[jkv] & \mbox{ if } |i-j|\leq k \mbox{ and }  |j-k|>  k.
\end{cases}
\end{equation*}

The set of equations of the form (\ref{kboxwdrec}) 
can be written out in matrix form. That is 
let 
$\vec{B}_{\ell,1}$ be the row vector of length $\ell^2$ of 
the $B^{(ij)}_{\ell,1}(t,x)$ where the elements are listed in 
the lexicographic order of the pairs $(ij)$. 
Let 
$\vec{I}_{\ell,k}$ be the row vector of length $\ell^2$ of the 
terms $t^2x^{2\chi(|i-j| \leq k)}$ again listed in 
the lexicographic order on the pairs $ij$. For example, 
$$ \vec{I}_{4,2}=(x^2t^2,x^2t^2,x^2t^2,t^2,x^2t^2,x^2t^2,x^2t^2,x^2t^2,
x^2t^2,x^2t^2,x^2t^2,x^2t^2,t^2,x^2t^2,x^2t^2,x^2t^2).$$
Then one can write a set of equations of the form (\ref{kboxwdrec}) in 
the form 
$$(\vec{I}_{\ell,x})^T = \mathbb{B}_{\ell,k} (\vec{B}_{\ell,k})^T$$
where 
$ \mathbb{B}_{\ell,k}$ is an $\ell^2\times \ell^2$ matrix. 
For example, $\mathbb{B}_{4,2}$ is the matrix

{\tiny 
$$\left(
\begin{array}{cccccccccccccccc}
xt-1 & xt & xt & x^2 t & 0 & 0 & 0 & 0 & 0 & 0 & 0 & 0 & 0 & 0 & 0 & 0  \\
0  & -1 & 0  & 0  & xt & xt  & xt & xt & 0 & 0 & 0 & 0 & 0 & 0 & 0 & 0  \\
0 & 0  & -1 & 0  & 0 & 0 & 0 & 0  & xt & xt & xt & xt & 0 & 0 & 0 & 0 \\
0 & 0 & 0 & -1 & 0 & 0 & 0 & 0 & 0 & 0 & 0 & 0 & t & t & t & t \\
xt & xt &xt & x^2t & -1 & 0 & 0 & 0 & 0 & 0 & 0 & 0 & 0 & 0 & 0 & 0  \\
0  & 0 & 0  & 0 & xt & xt-1  & xt & xt &  0 & 0 & 0 & 0 & 0 & 0 & 0 & 0  \\
0  & 0 & 0  & 0 & 0 & 0 & -1 & 0  & xt & xt & xt & xt & 0 & 0 & 0 & 0 \\
0  & 0 & 0  & 0 & 0 & 0 & 0 & -1 & 0 & 0 & 0 & 0 & x^2t & xt & xt & xt \\
xt & xt & xt & x^2t & 0 & 0 & 0 & 0 & -1 & 0 & 0 & 0 & 0 & 0 & 0 & 0  \\
0  & 0 & 0  & 0 & xt & xt   & xt & xt &  0 & -1 & 0 & 0 & 0 & 0 & 0 & 0  \\
0  & 0 & 0  & 0  & 0 & 0 & 0 & 0  & xt & xt & xt -1& xt & 0 & 0 & 0 & 0 \\
0  & 0 & 0 & 0 & 0 & 0 & 0 & 0 & 0 & 0 & 0 & -1 & x^2t & xt & xt & xt \\
t & t & t & t & 0 & 0 & 0 & 0 & 0 & 0 & 0 & 0 & -1 & 0 & 0 & 0  \\
0  & 0 & 0  & 0 & xt & xt  & xt & xt &  0 & 0 & 0 & 0 & 0 & -1 & 0 & 0  \\
0  & 0 & 0  & 0  & 0 & 0 & 0 & 0  & xt & xt & xt & xt & 0 & 0 & -1 & 0 \\
0 & 0 & 0 & 0 & 0 & 0 & 0 & 0 & 0 & 0 & 0 & 0 & x^2t & xt & xt & xt-1 
\end{array}\right)$$}

Note that $ \mathbb{B}_{\ell,k}$ is invertible since 
setting $x=t=0$ in $ \mathbb{B}_{\ell,k}$ will give 
the $\ell \times \ell$ diagonal matrix with $-1$s on 
the diagonal. Thus 
$$(\vec{B}_{\ell,k})^T = \mathbb{B}_{\ell,k}^{-1} (\vec{I}_{\ell,k})^T.$$
Let $\vec{1}_{\ell,1}$ denote the vector of length $\ell^2$ consisting 
of all 1s. Then 
$$\sum_{1 \leq i,j \leq \ell} B^{(ij)}_{\ell,1}(x,t) = 
\vec{1}_{\ell,1} \mathbb{B}_{\ell,k}^{-1} (\vec{I}_{\ell,k})^T.$$
Taking into account the empty word and all the 
words of length 1 will yeild the following theorem.

\begin{theorem}\label{thm:kboxgf} For all $\ell \geq 2$, 
\begin{equation*}
B_{\ell,k}(x,t) = 1 + \ell t + \vec{1}_{\ell,k} 
\mathbb{B}_{\ell,k}^{-1} (\vec{I}_{\ell,k})^T.
\end{equation*}
\end{theorem}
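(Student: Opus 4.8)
The plan is to follow the derivation of Theorem~\ref{thm:1boxgf}, of which this statement is the $k$-parameter generalization; indeed nearly all of the required scaffolding is already assembled in the discussion preceding the statement, so the task is mainly to organize it into a clean argument. First I would establish the recurrence (\ref{kboxwdrec}) for the bivariate generating functions $B_{\ell,k}^{(ij)}(x,t)$ by the prepend-a-letter decomposition: every word in $ij[\ell]^*$ is either the two-letter word $ij$ itself, contributing the base term $x^{2\chi(|i-j|\leq k)}t^2$ (two matches precisely when $|i-j|\leq k$), or else has the form $ijmv$ with $m\in[\ell]$ and $v\in[\ell]^*$, obtained by adjoining $i$ to the front of a word $jmv$ counted by $B_{\ell,k}^{(jm)}(x,t)$.

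The crux is to compute how $(1,k)\mbox{-rec}$ changes under this prepending. Adjoining $i$ can only alter the match status of the new first letter $i$ and of the now-second letter $j$; every position inside $v$ retains both of its neighbours and is untouched, so its weight carries over verbatim. The letter $i$ becomes a match exactly when $|i-j|\leq k$, contributing a factor $x^{\chi(|i-j|\leq k)}$. The delicate contribution is at the seam: in $jmv$ the letter $j$ was already a match iff $|j-m|\leq k$, so prepending $i$ creates a genuinely new match at position $j$ only when $|i-j|\leq k$ and $|j-m|>k$. Combining these two effects reproduces exactly the three weight factors $t\chi(|i-j|>k)$, $xt\chi(|i-j|\leq k)\chi(|j-m|\leq k)$, and $x^2t\chi(|i-j|\leq k)\chi(|j-m|>k)$ of (\ref{kboxwdrec}). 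I would emphasize that this seam analysis---precisely the reason the state must record the first \emph{two} letters, forcing the matrix to be indexed by pairs $(ij)$---is the only place demanding genuine care, and that the rest is routine bookkeeping.

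The remainder is linear algebra identical to the $k=1$ case. Writing the $\ell^2$ equations of the form (\ref{kboxwdrec}) as $(\vec{I}_{\ell,k})^T=\mathbb{B}_{\ell,k}(\vec{B}_{\ell,k})^T$, I observe that specializing $x=t=0$ turns $\mathbb{B}_{\ell,k}$ into $-I$, so $\det \mathbb{B}_{\ell,k}$, being a polynomial in $x$ and $t$ that does not vanish at the origin, is not identically zero; hence $\mathbb{B}_{\ell,k}$ is invertible over the field of rational functions and $(\vec{B}_{\ell,k})^T=\mathbb{B}_{\ell,k}^{-1}(\vec{I}_{\ell,k})^T$. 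Summing all $\ell^2$ coordinates yields $\sum_{1\leq i,j\leq\ell}B_{\ell,k}^{(ij)}(x,t)=\vec{1}_{\ell,k}\mathbb{B}_{\ell,k}^{-1}(\vec{I}_{\ell,k})^T$. Finally, since $B_{\ell,k}(x,t)$ must enumerate \emph{all} words, I would add back the contribution $1$ of the empty word and $\ell t$ of the $\ell$ single-letter words (none of which can match the $(1,k)$-rectangle pattern), giving $B_{\ell,k}(x,t)=1+\ell t+\vec{1}_{\ell,k}\mathbb{B}_{\ell,k}^{-1}(\vec{I}_{\ell,k})^T$. The only obstacle worth naming is verifying the seam weight at position $j$ and confirming, row by row against the displayed $\mathbb{B}_{4,2}$, that the $\ell^2\times\ell^2$ matrix is assembled exactly as the recurrence dictates.
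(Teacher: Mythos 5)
Your proposal is correct and follows essentially the same route as the paper: the two-letter-state recurrence \eqref{kboxwdrec} obtained by prepending a letter (with the same seam analysis at positions $i$ and $j$), the matrix formulation $(\vec{I}_{\ell,k})^T=\mathbb{B}_{\ell,k}(\vec{B}_{\ell,k})^T$, invertibility via the specialization $x=t=0$, and the final correction terms $1+\ell t$ for the empty and one-letter words. No substantive difference from the paper's argument.
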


Note that $B_{\ell,2}(x,t)=\frac{1}{1-\ell xt}$ for $\ell = 1,2,3$ since in 
such words every letter matches $(1,2)$-rectangle pattern. 
We have used Theorem \ref{thm:kboxgf} to compute 
$B_{\ell,2}(x,t)$ for $\ell = 4,5$:

\begin{equation*}\label{eq:B42}
B_{4,2}(x,t)= \frac{1-3 t (-1+x)+6 t^3 (-1+x)^2 x+4 t^4 (-1+x)^2 x^2+t^2 \left(2+9 x-11 x^2\right)}{1-t-3 t x-3 t^2 (-1+x) x+2 t^3 (-1+x)
x^2};
\end{equation*}

\begin{eqnarray*}\label{eq:B52}
&&B_{5,2}(x,t) = \\
&&-\frac{1+t (-1+x) \left(-4+t \left(16 x+t (-1+x) \left(-1+x \left(-2+t^3 (-1+x) x^2+4 t (1+x)\right)\right)\right)\right)}{-1+t
\left(1+4 x+t (-1+x) \left(-1+x \left(3+t^3 (-1+x) x^2+t (4+x)\right)\right)\right)}. \nonumber
\end{eqnarray*}
Using the generating functions above, we have computed 
some of the initial terms in their Taylor series expansions:

\begin{tiny} 
\begin{eqnarray*}
&&B_{4,2}(x,t)  =1+4 t+2 \left(1+7 x^2\right) t^2+2 \left(1+6 x^2+25 x^3\right) t^3+2 \left(1+7 x^2+22 x^3+98 x^4\right) t^4+\\
&&2 \left(1+8 x^2+27 x^3+93
x^4+383 x^5\right) t^5+2 \left(1+9 x^2+32 x^3+117 x^4+396 x^5+1493 x^6\right) t^6+\\
&&2 \left(1+10 x^2+37 x^3+142 x^4+519 x^5+1659 x^6+5824 x^7\right)
t^7+\\
&&2 \left(1+11 x^2+42 x^3+168 x^4+652 x^5+2247 x^6+6930 x^7+22717 x^8\right) t^8+\cdots;
\end{eqnarray*}

\begin{eqnarray*}
&&B_{5,2}(x,t) = 1+5 t+\left(6+19 x^2\right) t^2+5 \left(2+8 x^2+15 x^3\right) t^3+\left(16+88 x^2+160 x^3+361 x^4\right) t^4+\\
&&\left(26+176 x^2+358 x^3+876
x^4+1689 x^5\right) t^5+\\
&&\left(42+342 x^2+724 x^3+2106 x^4+4496 x^5+7915 x^6\right) t^6+\\
&&\left(68+644 x^2+1416 x^3+4586 x^4+11328 x^5+22976 x^6+37107
x^7\right) t^7+\\
&&\left(110+1190 x^2+2680 x^3+9562 x^4+25712 x^5+60762 x^6+116672 x^7+173937 x^8\right) t^8+\cdots.
\end{eqnarray*}
\end{tiny}
 
We also can compute the generating functions of the number of 
words that avoid the $(2,1)$-rectangle patterns for words 
$w \in [5]^*$. That is, we have that 
\begin{eqnarray*}
B_{5,2}(0,t) &=& \frac{1+4 t-t^3}{1-t-t^2} \\
&=& 1+5 t+6 t^2+10 t^3+16 t^4+26 t^5+42 t^6+68 t^7+110 t^8+\cdots.
\end{eqnarray*}

Note that 
$$\overline{B}_{\ell,k}(x,t) := B_{\ell,k}(1/x,xt) = 
\sum_{w \in [\ell]^*} x^{n-((1,k)\mbox{-rec}(w))}t^n$$ 
so that $\overline{B}_{\ell,k}(0,t)$ is the generating function 
of all words in $w =w_1 \ldots w_n \in[k]^*$ such that 
 $(1,k)\mbox{-rec}(w)=n$, i.e. each letter of $w$ differs from at least one neighbor by $k$ or less. We have computed $\overline{B}_{\ell,2}(0,t)$ for 
$k = 4,5$. 

\begin{equation*}
\overline{B}_{4,2}(0,t) = \frac{1-3 t+11 t^2+6 t^3+4 t^4}{1-3 t-3 t^2-2 t^3}.
\end{equation*}
The initial terms of this series are 
$1,0,14,50,196,766,2986,11648,44343,177218,691252,\ldots$. This sequence 
does not appear in the OEIS.

\begin{equation*}
\overline{B}_{5,1}(0,t) = \frac{1-3t+9t^2-4t^3+6t^4+4t^6}{1-3t-4t^2-6t^4-4t^5-4t^6}.
\end{equation*}
The initial terms of this series are 
$1,0,19,75,361,1689,7915,37107,173937,815345,\ldots$. This sequence 
also does not appear in the OEIS. 

Our methods obviously extend to allow us to write a matrix equation 
for the generating function 
$B_{\ell,a,b}(x,t) = \sum_{w \in [\ell]^*} x^{(a,b)\mbox{-rec}(w)} t^{|w|}.$
However, it becomes computationally unfeasiable even in the case 
of $2\mbox{-box}(w)$. That is, one has to keep track of the first four 
letters to be able to compute the necessary recursions. 
For example, let 
$$B^{rstu}_{\ell,2\mbox{-box}}(x,t) =  
\sum_{w \in rstu [\ell]^*} x^{2\mbox{-box}(w)} t^{|w|},$$ where $rstu [\ell]^*$ is the set of all words over $[\ell]$ that begin with letters $rstu$.
Then it is easy to 
see that 

$$B^{rstu}_{\ell,2\mbox{-box}}(x,t) = x^{2\mbox{-box}(rstu)}t^4 + \sum_{v =1}^{\ell} 
\theta(rstuv) B^{stuv}_{\ell,2\mbox{-box}}(x,t)$$
where $\theta(rstuv)$ is computed according the following 
four cases. \\
\ \\
{\bf Case 1.} $|r-s| > 2$ and $|r-t| > 2$. In this case, $\theta(rstuv) =t$.\\
\ \\
{\bf Case 2.} $|r-s| > 2$ and $|r-t| \leq 2$. In this case, $\theta(rstuv) =xt$ if $t$ matches the 2-box pattern in 
$stuv$ and $\theta(rstuv) =x^2t$ if $t$ does not match the 2-box pattern in 
$stuv$. That is, for any word $w \in [\ell]^*$, 
the presence of $r$ does not effect whether $s$ will 
match the 2-box pattern in $rstuw$, but it does effect the question of 
whether $t$ matches the 2-box pattern in $rstuvw$.\\
\ \\
{\bf Case 3.} $|r-s| \leq 2$ and $|r-t| > 2$. In this case, $\theta(rstuv) =xt$ if $s$ matches the 2-box pattern in $stu$ and $\theta(rstuv) =x^2t$ if $s$ does not match the 2-box pattern in 
$stu$. That is, for any word $w \in [\ell]^*$, 
the presence of $r$ does not effect whether $t$ will 
match the 2-box pattern in $rstuw$, but it does effect the question of 
whether $s$ matches the 2-box pattern in $rstuvw$.\\
\ \\
{\bf Case 4.} $|r-s| \leq 2$ and $|r-t| \leq 2$. In this case $\theta(rstuv) =xt$ if 
both $s$ and $t$ match the 2-box pattern in $stuv$, $\theta(rstuv) =x^2t$ if 
exactly one of $s$ and $t$ match the 2-box pattern in $stuv$, and 
$\theta(rstuv) =x^3t$ if 
neither $s$ nor $t$ match the 2-box pattern in $stuv$.\\
\ \\
This recursion allows us to write a simple matrix type equation for 
the generating function $B_{\ell,2\mbox{-box}}(x,t)$; however, 
it requires that we have to invert an $\ell^4 \times \ell^4$ matrix 
which is not really feasible even for small $\ell$. Indeed, the 
generating function $B_{\ell,2\mbox{-box}}(x,t)$ is trivial for $\ell \leq 3$, 
so the smallest non-trivial $\ell$ is $\ell =4$ which requires we would have 
to invert a $4^4 \times 4^4$-matrix.

\section{Conclusion}

The goal of this paper was to introduce $k$-box patterns and to study them, mainly in the case of $k=1$, on permutations and words. In the upcoming paper  \cite{kitrem3}, we study 1-box patterns on pattern-avoiding permutations (more precisely, on $132$-avoiding permutations and on {\em separable permutations}).

\end{document}